\begin{document}

\setcounter{secnumdepth}{3}

\setcounter{tocdepth}{2}

\title[GENERALIZED LUTTINGER SURGERY AND OTHER CUT-AND-PASTE CONSTRUCTIONS]{\textbf{GENERALIZED LUTTINGER SURGERY AND OTHER CUT-AND-PASTE CONSTRUCTIONS IN GENERALIZED COMPLEX GEOMETRY}
}

\author[Lorenzo Sillari]{Lorenzo Sillari}

\address{LS: Scuola Internazionale Superiore di Studi Avanzati (SISSA), Via Bonomea 265, 34136 Trieste (TS), Italy.} \email{lsillari@sissa.it}

\maketitle

\begin{abstract} 
\noindent \textsc{Abstract}. Exploiting the affinity between stable generalized complex structures and symplectic structures, we explain how certain constructions coming from symplectic geometry can be performed in the generalized complex setting. We introduce generalized Luttinger surgery and generalized Gluck twist along $\J$-symplectic submanifolds. We also export branched coverings to the generalized complex setting. As an application, stable generalized complex structures are produced on a variety of high-dimensional manifolds. Remarkably, some of them have non-homotopy-equivalent path-connected components of their type change locus.
\end{abstract}

\blfootnote{  \hspace{-0.55cm} 
{\scriptsize 2020 \textit{Mathematics Subject Classification}. Primary: 53C15, 53D18; Secondary: 53D05, 57R17. \\ 
\textit{Keywords:} Branched coverings, generalized complex geometry, Gluck twist, Luttinger surgery, stable generalized complex structures, torus surgery, type change locus.}}

\section{Introduction}

A \emph{generalized complex structure} (\cite{Hit03, Gua03,Gua11}) $\J$ on a $2m$-manifold $M$ induces a symplectic structure on a subspace of $T_p M$ and a transverse complex structure. The dimension of the latter is an integer $t \in \{ 0, \dots , m\}$, called the \emph{type} of $\J$ at $p$. Symplectic structures have type $0$ at every point and they are the most generic situation, while complex structures have type $m$ at every point and they are the less generic one. \\
In general, the type might change on $M$, and the set of points where it is not locally constant is called the \emph{type change locus} of $\J$. The most generic case with non-empty type change locus is that of \emph{stable} generalized complex structures and was extensively studied in \cite{CG18}. Around points where $\J$ has type $0$ (i.e. outside of the type change locus), $M$ behaves like a symplectic manifold. This allows to consider ideas and techniques from symplectic geometry and import them in generalized complex geometry, leading to interesting consequences and fruitful applications (\cite{CG07, CG09, Tor12, TY14, Got16, GH16, BCG17, BCK18, CG18, CK18, BCD19, CK19,  CKW20, CKW22}).
\vspace{.2cm}

A fundamental notion in doing so is that of \emph{$\J$-symplectic submanifold} (\cite{GH16, Mun18}), the generalized complex analogous of a symplectic submanifold. The key observation is that as soon as we have a $\J$-symplectic submanifold, several techniques involving symplectic submanifolds extend with little or no effort to the generalized complex setting. In this paper, we address three of them, namely \emph{generalized Luttinger surgery}, a generalization of \emph{torus surgery} (\cite{CG07}), \emph{generalized Gluck twist}, a combination of generalized Luttinger surgery and \emph{Gluck twist} (\cite{Glu62}), and \emph{branched coverings}. However, this paper is not meant to cover every tool that can be obtained in this way, nor to exhaust all the possible examples that can be produced using them. Following our guidelines, the list of available constructions can be easily enlarged and many more examples will arise combining such techniques.
\vspace{.2cm}

In more detail, torus surgery has been fruitfully employed in generalized complex geometry in dimension $4$ (\cite{CG07, CG09, Tor12, TY14, GH16}) and $6$ (\cite{Mun18}) to obtain many examples of stable generalized complex structures. The surgery is performed along a $\J$-symplectic torus of codimension $2$ embedded in a stable generalized complex manifold.\\
Generalized Luttinger surgery extends torus surgery in two ways: first, it can be applied to manifolds of any even dimension $2m \ge 6$. This is the first occurrence of a similar construction in dimension $2m \ge 8$. Second, it modifies the type change locus by adding path-connected components that are homotopy-equivalent to a product $T^2 \times \Sigma$, where $\Sigma$ is a symplectic $(2m-4)$-manifold. This is new even in dimension $6$, where the only known possibilities were $T^4$ and $T^2 \times \S^2$ (\cite{Mun18}). More precisely, we prove the following

\begin{thmx}\label{main:surgery:intro}
Let $(M^{2m}, \J, H)$ be a stable generalized complex structure and let $S$ be its type change locus. Let $(\Sigma, \omega_\Sigma)$ be a symplectic $(2m-4)$-manifold. Assume that there exists an embedded $T^2 \times \Sigma \hookrightarrow M$ such that
\begin{itemize} 
    \item [$(i)$] there exists a neighborhood $N_T$ of $T^2 \times \Sigma$ that is diffeomorphic to $D^2 \times T^2 \times \Sigma$;  
    \item [$(ii)$] $T^2 \times \Sigma$ is $\J$-symplectic and $\omega_{|T^2 \times \Sigma} = d \theta^{12} + \omega_\Sigma$, where $\omega$ is the symplectic form induced by $\J$ on $N_T$, up to $B$-field equivalence.
\end{itemize}
Then performing generalized Luttinger surgery along $T^2 \times \Sigma$ yields a stable generalized complex structure $(\Tilde{M}, \Tilde{\J}, \Tilde{H})$ whose type changes along $S \sqcup \left( T^2 \times \Sigma \right)$.
\end{thmx}

The possibility of performing surgeries along $T^2 \times \Sigma$ is used to prove the following

\begin{thmx}
There exist stable generalized complex structures with non-homotopy-equivalent path-connected components of their type change locus.
\end{thmx}

We perform surgeries on stable generalized complex manifolds of the form $M \times T^2$ to obtain stable generalized complex manifolds of the form $\Tilde{M} \times \S^1$. A wise choice of the surgery parameters allows to produce manifolds $\Tilde{M}$ with a wide choice of fundamental groups. Under additional hypotheses, we pin down the diffeomorphism type of our $6$-dimensional examples.
\vspace{.3cm}

Generalized Luttinger surgery can be easily modified allowing more freedom for the gluing diffeomorphism. We describe how to do that in general. This paves the way to a possibly huge palette of constructions and enlarges the class of diffeomorphism types of manifolds admitting a stable generalized complex structure. Making an explicit choice of diffeomorphism, we introduce generalized Gluck twist. It extends from dimension $6$ to any even dimension $2m \ge 8$ the surgery described in \cite{Mun18}, Theorem 3.3.2. Results similar to those proved for generalized Luttinger surgery are true also for generalized Gluck twist.

\begin{thmx}
Let $(M^{2m}, \J, H)$ be a stable generalized complex structure and let $S$ be its type change locus. Let $(R, \omega_R)$ be a symplectic $(2m-6)$-manifold. Assume that there exists an embedded $T^2 \times R \times \S^2 \hookrightarrow M$ such that
\begin{itemize} 
    \item [$(i)$] there exists a neighborhood $N_T$ of $T^2 \times R \times \S^2$ that is diffeomorphic to $D^2 \times T^2 \times R \times \S^2$;  
    \item [$(ii)$] $T^2 \times R \times \S^2$ is $\J$-symplectic and $\omega_{|T^2 \times R \times \S^2} = d \theta^{12} + dh \, d\theta^S + \omega_R$, where $\omega$ is the symplectic form induced by $\J$ on $N_T$, up to $B$-field equivalence.
\end{itemize}
Then performing generalized Gluck twist along $T^2 \times R \times \S^2$ yields a stable generalized complex structure $(\Tilde{M}, \Tilde{\J}, \Tilde{H})$ whose type changes along $S \sqcup ( T^2 \times R \times \S^2)$.
\end{thmx}
\vspace{.3cm}

The third construction we study is that of branched coverings. The symplectic side of the problem was already settled in Lemma 1 of \cite{Gom98}. Rephrasing the result in the generalized complex language, we obtain that branched coverings of stable generalized complex $2m$-manifolds, with $2m \ge 4$, are still stable generalized complex.

\begin{thmx}
Let $(M^{2m}, \J,H)$ be a stable generalized complex structure whose type change locus has $k \in \N$ path-connected components. Let $B$ be a $\J$-symplectic submanifold of $M$, and let $\pi: \Tilde{M} \rightarrow M$ be a $d$-fold branched covering with branch locus $B$. Then there exists a stable generalized complex structure $(\Tilde{M}, \Tilde{\J}, \Tilde{H})$ whose type change locus has $d \cdot k$ path-connected components.
\end{thmx}

Combining generalized Luttinger surgery and coverings, we further exemplify the versatility of our ideas and their potential in producing stable generalized complex structures. Finally, using branched coverings we show a new way to put stable generalized complex structures on  $T^2 \times \Sigma_g $, where $\Sigma_g$ is the surface of genus $g \ge 1$, and on the elliptic surfaces $E(2n)$.

Even though we stated our results for stable generalized complex structures, they can be extended with no effort also to unstable structures (cf. Remark \ref{rem:unstable}).
\medskip

\textit{Acknowledgements.} This paper is the first part of the ongoing PhD project of the author. He is extremely grateful and deeply indebted to Rafael Torres, who provided the main results describing the techniques and most of the consequences we presented here. He also pointed out to the author many of the examples. This paper would not have been written without his expertise and patience. The author also thanks Roberto Rubio for comments that improved the exposition of the paper.

\section{Preliminaries}\label{section:preliminaries}

In this section we recall basic definitions and results in generalized complex geometry. Many of them can be found in \cite{Gua03}, while the others are referenced throughout the text. We also introduce our notation. Unless otherwise specified, using the word \emph{manifold} we mean a compact orientable smooth manifold with no boundary, and we shall often write $M^k$ to denote a $k$-manifold. If $\{ x_j \}$ are local (real or complex) coordinates, we will write $dx^{j_1 \dots j_k}$ referring to the differential form $dx^{j_1} \wedge \dots \wedge dx^{j_k}$.
\vspace{.3cm}

Let $M$ be a $2m$-manifold. Denote by $TM$, respectively $T^*M$, its tangent bundle, respectively cotangent bundle, and set $E := TM \oplus T^*M$.\\
Fixed any closed $3$-form $H$, the \emph{$H$-twisted Courant bracket} can be defined on sections of $E$ as follows:
\[
[X + \xi, Y + \eta]_H := [X,Y] + \mathcal{L}_X \eta - \mathcal{L}_Y \xi -\frac{1}{2} d \left( \eta(X) - \xi (Y) \right) + \iota_Y \iota_X H.
\]
Sections of $E$ also admit a natural symmetric pairing of signature $(n,n)$ defined by
\[
\langle X + \xi, Y + \eta \rangle := \frac{1}{2} \left( \eta(X) + \xi(Y) \right).
\]
\begin{definition}
A \emph{generalized complex structure twisted by $H$}, $(M,\J,H)$, is a maximal isotropic sub-bundle $L$ of $E^\C$ satisfying the following conditions:
\begin{itemize}
    \item \emph{Non-degeneracy:} $L \cap \bar L = \{ 0 \}$;
    \item \emph{Integrability:} $L$ is closed under the $H$-twisted Courant bracket.
\end{itemize}
Equivalently, $\J$ can be described in terms of a complex line bundle $K \subset \Omega^\bullet$, generated by a \emph{pure spinor}, i.e., an element $\rho \in \Omega^\bullet$ that has the local form
\[
\rho = e^{B+i\omega} \wedge \Omega,
\]
where $B$, and $\omega$ are real $2$-forms and $\Omega$ is a decomposable $k$-form. Moreover the pure spinor must satisfy the conditions:
\begin{itemize}
    \item \emph{Non-degeneracy:} the forms $\Omega$ and $\omega$ must provide a non-degenerate top-form
    \[
    \Omega \wedge \bar \Omega \wedge \omega^{m-k} \neq 0;
    \]
    \item \emph{Integrability:} there exists a section $X + \xi$ of $E$ such that
    \[
    d_H \rho := d \rho + H \wedge \rho = (X + \xi ) \cdot \rho,
    \]
    where $(X+\xi) \cdot \rho := \iota_X \rho + \xi \wedge \rho$ is the \emph{Clifford action} of sections of $E$ on spinors.
\end{itemize}
\end{definition}

The integer $\deg \Omega = k \in \{ 0, 1, \dots m \}$, defined at a point $p$, is the \emph{type} of the generalized complex structure at $p$, and it is not necessarily constant on $M$. However, it is an upper semi-continuous function, thus locally it can only decrease. Such a jump preserves the parity of $k$, that we call the \emph{parity} of $\J$. 

\begin{definition}
The type change locus of a generalized complex structure $(M,\J,H)$ is the set of points of $M$ where the type of $\J$ jumps, i.e., it is not locally constant. 
\end{definition}

Given any $2$-form $B$, the \emph{$B$-field transform} provided by $B$ is the action on pure spinors
\[
\rho \longmapsto e^B \wedge \rho.
\]
If $\rho$ is integrable with respect to $H$, then any pure spinor obtained from $\rho$ via $B$-field transform is integrable with respect to $H-dB$. Two generalized complex structures are said to be \emph{equivalent} if there exists a $B$-field transform sending one to the other.

\begin{example}
Complex structures are generalized complex. If $(M,J)$ is a complex manifold, then its canonical bundle $K:=\bigwedge^n T^*M^{1,0}$ defines a generalized complex structure locally described by $\rho = \Omega \in K$, that has constant type $m$. The converse also holds: if a generalized complex structure has constant type $m$ then it is $B$-field equivalent to one induced by a complex structure.
\end{example}

\begin{example}
Symplectic forms induce a generalized complex structure. If $(M,\omega)$ is a symplectic manifold, then the pure spinor $e^{i \omega}$ generates a canonical bundle $K$. The associated generalized complex structure $\J_\omega$ has constant type $0$. If a generalized complex structure has constant type $0$ then it is $B$-field equivalent to one induced by a symplectic form.
\end{example}

Among generalized complex structures, a special role is played by \emph{stable structures} (\cite{CG18}). Let $K$ be the canonical bundle associated to $\J$ and let $K^*$ be the corresponding anticanonical bundle. Let $s$ be a local section generating $K^*$, given by $s= \rho_0$, the zero-degree part of $\rho$. The type of $\J$ jumps precisely along the zero section, and it is constantly equal to $0$ outside of it.

\begin{definition}[\cite{CG18}, Definition 2.10]
A generalized complex structure $(M,\J,H)$ is called \emph{stable} if $s$ is transverse to the zero section $s^{-1}(0)$ of $K^*$.
\end{definition}

Outside of the type change locus, $\J$ is $B$-field equivalent to the structure induced by a symplectic form $\omega$. The type change locus is locally the non-degenerate zero set of a function, thus it has codimension $2$. 
\vspace{.2cm}

The main purpose of this paper is to carry techniques from symplectic geometry into the generalized setting. To do so, we need an extension of the notion of symplectic submanifold.

\begin{definition}[cf. \cite{GH16} and \cite{Mun18}, Definition 2.3.6]\label{j:symplectic}
Let $(M, \mathcal{J}, H)$ be a stable generalized complex structure. A submanifold $N\subset M$ is said to be \emph{$\mathcal{J}$-symplectic} if
\begin{itemize}\item there is a neighborhood $U$ of $N$ equipped with a symplectic form $\omega$ such that $N$ is a symplectic submanifold with respect to $\omega$; 
\item when restricted to $U$, the generalized complex structure $\mathcal{J}$ is $B$-field equivalent to the generalized complex structure $\mathcal{J}_{\omega}$ induced by $\omega$.
\end{itemize}
\end{definition}

\begin{remark}\label{rem:unstable}
All the constructions we describe in the rest of the paper are presented for stable generalized complex structures. However, they can be performed independently of the stability of the considered structures: we just need the notion of $\J$-symplectic submanifold, that can be easily extended to unstable structures since it depends on the behaviour of $\J$ on a neighborhood of a submanifold, and not on the global one.
\end{remark}

We give a fundamental local example. Due to the local classification of generalized complex structures (\cite{AB06, Bai12, Bai13}), every generalized complex structure is locally the product of a symplectic structure with a holomorphic Poisson structure. If we restrict to stable structures, we have the following

\begin{cor}[\cite{Bai12}, Proposition 4.2.7]\label{cor:local:stable}
Any stable generalized complex structure is locally equivalent to the structure on $\C^2 \times \R^{2m-4}$ generated by
\[
\rho_0 = (z_1 + dz^{12}) \wedge e^{i \omega_0},
\]
where $\C^2 = \{ (z_1, z_2) \}$ and $\omega_0$ is the standard symplectic form on $\R^{2m-4}$.
\end{cor}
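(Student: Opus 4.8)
The plan is to read the statement off the local classification of generalized complex structures recalled above, feeding in stability to rigidify the holomorphic Poisson factor. First I would handle a point $p$ outside the type change locus: there $\J$ has constant type $0$, so by the Example on symplectic structures it is $B$-field equivalent to $\J_\omega$ for a symplectic $\omega$, which Darboux's theorem normalizes to the standard form on $\R^{2m}$; on the model side, away from $\{z_1=0\}$ one rewrites $z_1+dz^{12}=z_1\,e^{z_1^{-1}dz^{12}}$, so the model spinor equals $z_1\,e^{z_1^{-1}dz^{12}}\wedge e^{i\omega_0}$, which, after splitting $z_1^{-1}dz^{12}$ into real and imaginary parts $B'+i\omega'$, applying the $B$-field transform by $-B'$ and rescaling by $z_1^{-1}$, becomes the structure $\J_{\omega'+\omega_0}$ induced by the symplectic form $\omega'+\omega_0$ (non-degenerate since $\big(\operatorname{Im}(z_1^{-1}dz^{12})\big)^2\neq 0$ where $z_1\neq 0$). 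So the two structures agree near such $p$, and the only substantive case is that of $p$ in the type change locus $S$.

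Near such a $p$, invoke the normal form: $\J$ is equivalent, for some $k$, to the product of the standard symplectic structure on $\R^{2m-2k}$ with a holomorphic Poisson structure $(\C^k,\beta)$, and since $\J$ has type $k$ at $p$ the bivector $\beta$ vanishes at the origin. The associated pure spinor is $\big(e^{\iota_\beta}(dz^{1\dots k})\big)\wedge e^{i\omega_0}$, and its degree-$0$ part --- i.e.\ $s=\rho_0$ --- equals $\operatorname{Pf}(\beta)$ (a homogeneous polynomial of degree $k/2$ in the entries of $\beta$), times the nonvanishing degree-$0$ part $1$ of $e^{i\omega_0}$. Since $\J$ has even parity (it has type $0$ somewhere), $k$ is even; since $p\in S$ we have $k\ge 2$; and stability forces $s$ to vanish to order exactly $1$ at $p$, while $\operatorname{Pf}(\beta)$ vanishes to order $\ge k/2$ because $\beta$ does to order $\ge 1$, so $k=2$. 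Thus $\beta=g(z_1,z_2)\,\partial_{z_1}\wedge\partial_{z_2}$ for a holomorphic $g$ with $g(0)=0$ (no integrability condition intervenes, $\bigwedge^3 T\C^2=0$), and $s=\rho_0=g$; transversality now says $dg(0)\neq 0$, equivalently that $\{g=0\}$ is a smooth complex curve through $0$ (here one uses that $g$ is holomorphic, so its real differential is onto $\R^2$ exactly when its complex differential is nonzero).

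It remains to normalize $\beta$. Straightening $\{g=0\}$ to $\{z_1=0\}$ by a biholomorphism and writing $g=z_1 u$ with $u(0)\neq 0$, I would kill $u$ by a further biholomorphism $\chi(z_1,z_2)=(z_1,\zeta(z_1,z_2))$: matching the degree-$0$ and degree-$2$ parts of $\chi^*\rho$ against a rescaling $\lambda\,(z_1+dz^{12})$ leads to $\partial_{z_2}\zeta=u(z_1,\zeta)$, $\zeta(z_1,0)=0$, a first-order holomorphic ODE in $z_2$ (with $z_1$ a parameter) whose unique solution is a local biholomorphism since $\partial_{z_2}\zeta(0)=u(0)\neq 0$. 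In the resulting coordinates $\beta=z_1\,\partial_{z_1}\wedge\partial_{z_2}$, and because $\iota_\beta(dz_1\wedge dz_2)=z_1$ while $\iota_\beta^2=0$ on a $2$-form, the Poisson-deformed canonical spinor is $e^{\iota_\beta}(dz^{12})=z_1+dz^{12}$; wedging with $e^{i\omega_0}$ gives exactly $\rho_0=(z_1+dz^{12})\wedge e^{i\omega_0}$.

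The bulk of the argument is imported from the cited local classification, and I expect the dictionary between stability and the normal form to be the main obstacle: one must check that transversality of $s$ to the zero section of $K^*$ is chart- and trivialization-independent and amounts to $g$ having a simple zero, and one must carry out the straightening carefully, absorbing the nonvanishing holomorphic factor $u$ simultaneously into the coordinate change and into the rescaling of the canonical line --- which is precisely the role of the small ODE above. Sign and normalization conventions in the Poisson deformation $e^{\iota_\beta}$ must also be tracked in order to land on $z_1+dz^{12}$ rather than a scalar multiple of it.
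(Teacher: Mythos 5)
The paper offers no proof of this corollary --- it is quoted verbatim from Bailey's Proposition 4.2.7 --- and your argument is a faithful reconstruction of the standard proof given there (and in Cavalcanti--Gualtieri): invoke the local holomorphic-Poisson normal form, observe that the anticanonical section is the Pfaffian of the bivector so that transversality forces a simple zero and hence $k=2$, then straighten the smooth zero curve of $g$ and absorb the unit $u$ by a fibered biholomorphism to land on $\beta = z_1\,\partial_{z_1}\wedge\partial_{z_2}$ and the spinor $(z_1+dz^{12})\wedge e^{i\omega_0}$. The only quibble is a bookkeeping point you already flag: with the pushforward convention the normalizing ODE reads $\partial_{z_2}\zeta = 1/u(z_1,z_2)$ rather than $\partial_{z_2}\zeta = u(z_1,\zeta)$ (your version constructs the inverse map), which does not affect the existence of the local biholomorphism since $u(0)\neq 0$.
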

The type change locus of $\rho_0$ consists in a single path-connected component $\{ z_1 = 0 \} = \{ 0 \} \times \C \times \R^{2m-4}$, on which $\rho_0$ has type $2$. Everywhere else, $\rho_0$ has type $0$. Indeed, taking coordinates $z_1 = r e^{i \theta}$, $z_2 = x_2 + i y_2$, such a structure is equivalent to the one induced by the symplectic form 
\begin{equation}\label{form:omega}
\Tilde{\omega} = d \log r \wedge dy^2 + d \theta \wedge dx^2 + \omega_0
\end{equation}
on the set $\{ z_1 \neq 0 \}$.
\vspace{.3cm}

\begin{example}\label{main:example}
Let $(\Sigma, \omega)$ be a symplectic $(2m-4)$-manifold. It is immediate to put a stable generalized complex structure on $D^2 \times T^2 \times \Sigma$. In fact, the product $\C^2 \times \Sigma$ admits a stable generalized complex structure generated by
\[
\rho = (z_1 + dz^{12}) \wedge e^{i \omega}.
\]
Taking the quotient of the second $\C$ factor by a lattice $\Z[i]$, $\rho$ induces a stable structure $\J$ on $D^2 \times T^2 \times \Sigma$, whose type changes from $0$ to $2$ along $\{ 0 \} \times T^2 \times \Sigma$. If $p \in D^2 \setminus \{ 0 \}$, all the submanifolds of the form $\{ p \} \times T^2 \times \Sigma$ are $\J$-symplectic, and the restriction of $\J$ to them is equivalent to the structure induced by a symplectic form $\Tilde{\omega}$ defined as in \eqref{form:omega}, replacing $\omega_0$ by $\omega$.
\end{example}

\section{Generalized Luttinger surgery on \texorpdfstring{$2m$}{}-manifolds}\label{section:luttinger:surgery}

In this section we introduce generalized Luttinger surgery on generalized complex $2m$-manifolds, with $2m \ge 6$. We begin by describing the smooth manifold that is obtained via the surgery and compute its basic topological invariants, with special attention to the fundamental group. Next, we explain how to put a stable generalized complex structure on it. We give examples and applications of the surgery, enlarging the possible diffeomorphism types of manifolds admitting stable generalized complex structures. Finally, we explain how generalized Luttinger surgery can be further extended changing the gluing diffeomorphism. Making a specific choice of diffeomorphism, we obtain generalized Gluck twist.

\subsection{The underlying smooth manifold.}\label{subsection:gluing} Let $D^2_\epsilon$ be the closed disk of radius $\epsilon$ with coordinates $r e^{i \theta^0}$, let $T^2$ be the $2$-torus with coordinates $(\theta^1, \theta^2)$ and let $(\Sigma, \omega_\Sigma)$ be a symplectic $(2m-4)$-manifold. Denote by $A(u,v)$ the annulus of radii $u,v$. Suppose that $M$ is a $2m$-manifold admitting an embedded $D^2_\epsilon \times T^2 \times \Sigma$. Consider the manifold
\begin{equation}\label{tilde:m}
\Tilde{M} := M \setminus \left( D^2_{\epsilon_0} \times T^2 \times \Sigma \right) \cup_\phi \left( D^2_1 \times T^2 \times \Sigma \right),
\end{equation}
obtained removing the embedded submanifold and gluing it back identifying $A(r_0,1) \times T^2 \times \Sigma$ with $A(\epsilon_0,\epsilon) \times T^2 \times \Sigma$ via a diffeomorphism
\[
\phi \colon A(r_0,1) \times T^2 \times \Sigma \longrightarrow A(\epsilon_0,\epsilon) \times T^2 \times \Sigma.
\]
The gluing diffeomorphism $\phi$ is defined by
\begin{equation}\label{choice:diffeo}
    \phi \left( r e^{i \theta^0}, \, \theta^1, \, \theta^2, \sigma \right) = \left( \sqrt{ 2 \log(r \epsilon )} \, e^{i \left(  p\theta^0 + a \theta^2 \right)}, \, \theta^1, \, q\theta^0 + b\theta^2, \sigma \right),
\end{equation}
where $\sigma \in \Sigma$ and $p,q,a,b$ are integers such that $pb-aq = 1$. Furthermore we can choose (cf. \cite{GH16}) $p,q,a,b$ in such a way that
\[
pbt - aq \neq 0 \qquad \text{for all $t \in [0,1]$.}
\]
Note that $\phi$ acts on $D^2 \times T^2$ as a $4$-dimensional torus surgery of multiplicity $p$ (see \cite{GS99} for the construction in the smooth setting and \cite{GH16} for the generalized complex setting). This allows to give a precise meaning to the parameters $p$, $q$, $a$, $b$. They are representing a diffeomorphism of the boundary $\partial D^2 \times T^2$ by a matrix
\[
\begin{bmatrix}
p & 0 & q \\
0 & 1 & 0 \\
a & 0 & b 
\end{bmatrix}
\in SL( 3, \Z).
\]
Using handle decomposition of $D^2 \times T^2$ it can be checked (\cite{GS99}, Ch. 4 and 8) that the resulting manifold is determined by the isotopy class of $\phi ( \partial D^2_1 \times \{ pt \})$. With this in mind, we proceed to compute the basic topological invariants of $\Tilde{M}$.

\begin{proposition}
The manifolds $M$ and $\Tilde{M}$ have the same Euler characteristic. If $2m= 4l$ for some $l\in \N$, they also have the same signature. The fundamental group of $\Tilde{M}$ is given by the quotient 
\[
\left( \pi_1 \left( M \setminus \left( D^2_{\epsilon_0} \times T^2 \times \Sigma \right) \right) \ast \pi_1(T^2) \right) / N,
\]
where $N$ is the normal subgroup generated by $\phi_* (\pi_1 (\partial D^2_1 \times T^2))$.
\end{proposition}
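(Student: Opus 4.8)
The plan is to write $\Tilde M$ as a union of two codimension-zero pieces glued along a separating hypersurface, exactly mirroring the trivial such decomposition of $M$, and to read off all three invariants from this. Set $X := M \setminus \mathrm{int}(D^2_{\epsilon_0}\times T^2\times\Sigma)$, the part common to $M$ and to $\Tilde M$, so that $M = X\cup_{\mathrm{id}}(D^2_{\epsilon_0}\times T^2\times\Sigma)$ and $\Tilde M = X\cup_\phi(D^2_1\times T^2\times\Sigma)$, both gluings taking place along collars of the common separating hypersurface $\partial D^2\times T^2\times\Sigma\cong T^3\times\Sigma$. I would thicken this slightly to an open cover $\Tilde M = U\cup V$, with $U$ a regular neighbourhood of $X$ (so $U\simeq X$), $V$ a regular neighbourhood of the reglued block (so $V\simeq D^2\times T^2\times\Sigma\simeq T^2\times\Sigma$) and $U\cap V\simeq T^3\times\Sigma$, and likewise for $M$; everything then follows from standard tools applied to these two decompositions.

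For the Euler characteristic, inclusion–exclusion gives $\chi(\Tilde M) = \chi(X) + \chi(T^2\times\Sigma) - \chi(T^3\times\Sigma)$; both correction terms vanish because $\chi(T^2) = \chi(T^3) = 0$, so $\chi(\Tilde M) = \chi(X)$, and the identical computation for $M$ gives $\chi(M)=\chi(X)$. For the signature, when $2m = 4l$, I would invoke Novikov additivity of the signature under gluing of oriented $4l$-manifolds along their full boundary, which gives $\sigma(M) = \sigma(X) + \sigma(D^2\times T^2\times\Sigma)$ and $\sigma(\Tilde M) = \sigma(X) + \sigma(D^2\times T^2\times\Sigma)$; since the removed and reglued blocks are orientation-preservingly diffeomorphic (both being $D^2\times T^2\times\Sigma$, oriented so as to glue onto $X$, and in fact both of vanishing signature), the two terms agree and $\sigma(M)=\sigma(\Tilde M)$. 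Note that neither of these two arguments is sensitive to the gluing diffeomorphism $\phi$.

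For the fundamental group I would apply the Seifert–van Kampen theorem to $\Tilde M = U\cup V$, obtaining
\[
\pi_1(\Tilde M)\;\cong\;\pi_1(X)\ast_{\pi_1(T^3\times\Sigma)}\pi_1(T^2\times\Sigma),
\]
where the amalgamating map $\pi_1(T^3\times\Sigma)\to\pi_1(T^2\times\Sigma)$ is the surjection induced by $U\cap V\hookrightarrow V$ — it kills the meridian $\mu = [\partial D^2\times\{\mathrm{pt}\}]$ and is the identity on the longitudes $\lambda_1,\lambda_2$ and on $\pi_1(\Sigma)$ — while $\pi_1(T^3\times\Sigma)\to\pi_1(X)$ is $\phi_*$ followed by the collar inclusion. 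Because the first map is onto, the standard collapse of an amalgamated product over a surjection reduces this to $\pi_1(X)/\langle\langle\phi_*(\mu)\rangle\rangle$. Re-presenting the reglued block by adjoining the two new generators $t_1,t_2$ of its core torus $\{0\}\times T^2$ (so $\pi_1(T^2)=\langle t_1,t_2\rangle$) together with the relations that $\phi$ imposes on the boundary-torus loops — namely $t_i = \phi_*(\lambda_i)$ and $1 = \phi_*(\mu)$ — one recovers exactly the asserted presentation $\big(\pi_1(X)\ast\pi_1(T^2)\big)/N$, with $N$ the normal closure of $\phi_*(\pi_1(\partial D^2_1\times T^2))$.

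The first two invariants are essentially free; the real, and essentially only, point requiring care is this last identification. One must notice that $U\cap V$ deformation retracts onto $T^3\times\Sigma$ and not merely onto $T^3$, so that the $\pi_1(\Sigma)$ contributed by the reglued block gets amalgamated into the copy already present in $\pi_1(X)$ — this is precisely why only the free factor $\pi_1(T^2)$, and not $\pi_1(T^2\times\Sigma)$, is adjoined — and one must track the effect of $\phi_*$ on the three boundary generators carefully to see that the relations generating $N$ take the stated form.
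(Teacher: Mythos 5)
Your argument is correct and is, in substance, the proof the paper defers to: the paper's own ``proof'' of this proposition is the single line ``see \cite{BK13}, proof of Proposition 2.2'', and the decomposition you use (the common piece $X$ glued to a copy of $D^2\times T^2\times\Sigma$ along $T^3\times\Sigma$, with additivity of the Euler characteristic, Novikov additivity of the signature, and Seifert--van Kampen) is exactly the standard argument behind that reference. The one point worth making explicit is the final identification of $N$: read literally, quotienting $\pi_1(X)\ast\pi_1(T^2)$ by the normal closure of the subset $\phi_*\bigl(\pi_1(\partial D^2_1\times T^2)\bigr)\subset\pi_1(X)$ would kill $\phi_*(\lambda_1)$ and $\phi_*(\lambda_2)$ outright and leave a surviving free factor $\pi_1(T^2)$, which is not what van Kampen produces (and would contradict the computation in Example \ref{ex:kill}). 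The relations you actually derive, $\phi_*(\mu)=1$ and $t_i=\phi_*(\lambda_i)$, are the correct ones, so $N$ must be understood as normally generated by $\phi_*(\mu)$ and $t_i^{-1}\phi_*(\lambda_i)$; your Tietze-transformation step handles this correctly, and the imprecision lies in the statement's phrasing rather than in your proof. The remaining points --- the vanishing of the correction terms for $\chi$ because $\chi(T^2)=0$, the insensitivity of Novikov additivity to the gluing diffeomorphism, and the elimination of the $\pi_1(\Sigma)$ factor of the reglued block via the surjection $\pi_1(T^3\times\Sigma)\to\pi_1(T^2\times\Sigma)$ --- are all sound.
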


\begin{proof}
see \cite{BK13}, proof of Proposition 2.2.
\end{proof}

The described surgery allows to obtain manifolds with a wide choice of fundamental groups. We give two examples of that, but many more can be produced changing the parameters $p,a,b,q$. In the first example we reduce the fundamental group of the starting manifold. In the second one we show how to add torsion to it. In Sections \ref{subsection:example:surgery} and \ref{subsection:example:branched} we will endow our examples with a stable generalized complex structure (Proposition \ref{cor:symplectic:surgery}, Example \ref{generalized:add:cover}).

\begin{example}\label{ex:kill} Suppose that $X$ is a $(2m-2)$-manifold admitting an embedded $D^2 \times \Sigma$, with $\Sigma$ a $(2m-4)$-manifold. Assume that the inclusion 
\[
\pi_1 (X \setminus D^2 \times \Sigma) \hookrightarrow \pi_1 X
\]
is an isomorphism. Set $M:= X \times T^2$ and consider the manifold obtained performing the surgery \eqref{tilde:m} with parameters $p=0$, $a=1$, $b=0$, $q=1$, along
\[
D^2 \times \Sigma \times T^2 \hookrightarrow X \times T^2.
\]
Since $\phi$ is the identity on $\theta^1$, the resulting manifold is a product $\Tilde{M} \cong \Tilde{X} \times \S^1$. Moreover, $\phi$ is gluing the loop generated by $\theta^2$ in $\pi_1 (X \times T^2)$ to the loop $\partial D^2 \times \{ pt \}$. Thus, using Seifert-Van Kampen, we conclude that
\[
\pi_1 \Tilde{X} \cong \pi_1 (X \setminus D^2 \times \Sigma) \cong \pi_1 X.
\]
This reduces the fundamental group from $\pi_1 M \cong \pi_1 X \times \Z^2$ to $\pi_1 \Tilde{M} \cong \pi_1 X \times \Z$.
\end{example}

\begin{example}\label{ex:add:torsion}
Suppose that $X$ is a manifold as in Example \ref{ex:kill}. Assume that $X$ is also simply connected. Set $M:= X \times T^2$ and consider the manifold obtained performing the surgery \eqref{tilde:m} with parameters $p=0$, $a=1$, $b=0$, $q \ge 2$ along
\[
D^2 \times \Sigma \times T^2 \hookrightarrow X \times T^2.
\]
The resulting manifold $\Tilde{M}$ will be diffeomorphic to $\Tilde{X} \times \S^1$ and
\[
\pi_1 \Tilde{M} \cong \pi_1 \Tilde{X} \times \Z \cong \Z_q \times \Z.
\]
\end{example}

\subsection{The geometric structure.}\label{subsection:gls} Let $(M,\J,H)$ be a stable generalized complex structure. Let $(\Sigma, \omega_\Sigma)$ be a $(2m-4)$-symplectic manifold. Suppose that there exists an embedded, $\J$-symplectic, $D^2_\epsilon \times T^2 \times \Sigma \hookrightarrow M$ such that on it $\J$ is $B$-field equivalent to the generalized complex structure induced by the product symplectic form 
\begin{equation*}\label{prod:symp}
\Tilde{\omega} := r \, dr\, d\theta^0 + d\theta^{12} + \omega_\Sigma.
\end{equation*}

The choice of a diffeomorphism $\phi$ as in \eqref{choice:diffeo} induces a symplectic form $\phi^*(\Tilde{\omega})$ on the annulus $A(r_0,1) \times T^2 \times \Sigma$. Let $\xi$ be a radial smooth increasing function vanishing in a neighborhood of the origin and identically $1$ in a neighborhood of $[r_0,1]$. In the following lemma we use $\xi$ to extend $\phi^* (\Tilde{\omega})$ to a stable generalized complex structure defined on $D^2_\epsilon \times T^2 \times \Sigma$. Its proof is implicitly contained in the proof of Theorem 3.1 in \cite{GH16}.
\begin{lemma}\label{lemma:extension}
Let $\J_0$ be the stable generalized complex structure defined on $D^2_1 \times T^2 \times \Sigma$ by the pure spinor 
{\small
\[
\rho_0 = z_1 \, \exp \left( -\frac{p}{4} \xi ( \abs{z_1} ) \frac{dz^{1\bar 1}}{\abs{z_1}^2} -\frac{b}{2} \, dz^{2 \bar 2} + \frac{dz^1}{2 z_1} \left[ (\frac{a}{2}-q)dz^2 - (\frac{a}{2}+q) dz^{\bar 2} \right] \right)  \wedge e^{i \phi^*(\omega_\Sigma)}.
\]
}

The pullback form $\phi^*(\Tilde{\omega})$ can be extended to a stable generalized complex structure on $D^2_1 \times T^2 \times \Sigma$ that is $B$-field equivalent to $\J_0$.
\end{lemma}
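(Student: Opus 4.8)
The plan is to reduce the statement to the four-dimensional torus surgery of \cite{GH16}, Theorem 3.1: the symplectic factor $(\Sigma,\omega_\Sigma)$ is purely passive, because $\phi$ restricts to the identity on $\Sigma$ and $\omega_\Sigma$ is closed, so that $\rho_0$ is exactly the four-dimensional pure spinor of \cite{GH16} wedged with $e^{i\phi^*\omega_\Sigma}$. First I would make the gluing explicit: writing $\phi$ in coordinates and using that its radial part $s$ satisfies $s^2=2\log(r\epsilon)$, hence $s\,ds=d\log r$, one obtains on the annulus $A(r_0,1)\times T^2\times\Sigma$
\[
\phi^*\Tilde{\omega}=p\,d\log r\wedge d\theta^0+a\,d\log r\wedge d\theta^2-q\,d\theta^{01}+b\,d\theta^{12}+\phi^*\omega_\Sigma.
\]
Passing to the coordinates $z_1=re^{i\theta^0}$, $z_2=\theta^1+i\theta^2$ and recording $\tfrac{dz^1}{z_1}=d\log r+i\,d\theta^0$, $\tfrac{dz^{1\bar 1}}{|z_1|^2}=-2i\,d\log r\wedge d\theta^0$, $dz^{2\bar 2}=-2i\,d\theta^{12}$, one checks that on $\{\xi\equiv1\}$ the complex $2$-form $E$ appearing in the exponent of $\rho_0$ has imaginary part $\operatorname{Im}E=\phi^*\Tilde{\omega}-\phi^*\omega_\Sigma$ and real part $B_1:=\operatorname{Re}E=-q\,d\log r\wedge d\theta^1-\tfrac a2\,d\theta^{02}$, which is closed but does not extend over $\{z_1=0\}$; it is precisely this identity that pins down the constants in $\phi$ and in the exponent of $\rho_0$, as in \cite{GH16}. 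Consequently, on $\{\xi\equiv1\}$ one has $\rho_0=z_1\,e^{B_1}\wedge e^{i\phi^*\Tilde{\omega}}$, so $\J_0$ restricted there is the $B_1$-field transform of $\J_{\phi^*\Tilde{\omega}}$; equivalently, the generalized complex structure carried by the pullback form $\phi^*\Tilde{\omega}$ extends, up to a $B$-field, across $D^2_1\times T^2\times\Sigma$ to $\J_0$. It then remains to check that $\J_0$ is a well-defined stable generalized complex structure on all of $D^2_1\times T^2\times\Sigma$.

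I would verify the defining conditions one by one. \emph{Smoothness}: the apparent pole $\tfrac{dz^1}{2z_1}\bigl[(\tfrac a2-q)dz^2-(\tfrac a2+q)dz^{\bar 2}\bigr]$ in $E$ is absorbed by the overall factor $z_1$, because its square contains $dz^1\wedge dz^1=0$ so its exponential truncates, and $\xi(|z_1|)\tfrac{dz^{1\bar 1}}{|z_1|^2}$ is smooth since $\xi$ vanishes identically near the origin. \emph{Non-degeneracy}: away from $\{z_1=0\}$, where $\rho_0=z_1\,e^{B_1+i\phi^*\Tilde{\omega}}$, the condition is $(\phi^*\Tilde{\omega})^m\neq0$, which holds as $\phi^*\Tilde{\omega}$ is symplectic; along $\{z_1=0\}$, where $\xi\equiv0$, one computes $\rho_0=\Omega\wedge e^{i\phi^*\omega_\Sigma}$ with $\Omega=\tfrac12\,dz^1\wedge\bigl[(\tfrac a2-q)dz^2-(\tfrac a2+q)dz^{\bar 2}\bigr]$ decomposable of degree $2$ and $\Omega\wedge\bar\Omega$ a nonzero multiple of $dz^1\wedge d\bar z^1\wedge dz^2\wedge d\bar z^2$ whenever $aq\neq0$, hence $\Omega\wedge\bar\Omega\wedge(\phi^*\omega_\Sigma)^{m-2}\neq0$ for the relevant gluing parameters. \emph{Integrability}: since $\phi^*\omega_\Sigma$ is closed, $d\rho_0=d(z_1e^F)\wedge e^{i\phi^*\omega_\Sigma}$ with $F$ the full $2$-form exponent; and $F$ itself is closed (the only thing to check being $d\xi\wedge\tfrac{dz^{1\bar 1}}{|z_1|^2}=0$, the other summands of $F$ being manifestly closed), so $d\rho_0=dz^1\wedge e^F\wedge e^{i\phi^*\omega_\Sigma}$, and one produces a section $X+\eta$ of $E=TM\oplus T^*M$ tangent to the $D^2\times T^2$ directions, together with a closed $3$-form $H_0$, satisfying $d\rho_0+H_0\wedge\rho_0=(X+\eta)\cdot\rho_0$; because $\phi^*\omega_\Sigma$ involves no $D^2\times T^2$ coordinate, this is verbatim the four-dimensional computation of \cite{GH16}, Theorem 3.1. \emph{Stability}: the degree-$0$ component of $\rho_0$ is $z_1$ times a nowhere-zero function, hence transverse to the zero section of the (trivial) anticanonical bundle, so $\J_0$ is stable with type change locus exactly $\{z_1=0\}=\{0\}\times T^2\times\Sigma$, where the type is $2$.

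The step I expect to be the genuine obstacle is the integrability check: rewriting $d_{H_0}\rho_0$ as a Clifford action is a bookkeeping-heavy computation, and since $\rho_0$ vanishes in degree $0$ along $\{z_1=0\}$, the section $X+\eta$ realizing the action necessarily carries a $z_1^{-1}$-pole that must cancel exactly against the singular term of $F$; this is the part genuinely imported from \cite{GH16}, and the reason the $D^2\times T^2$ directions cannot be handled as a trivial product the way the $\Sigma$ factor can. A secondary point, already visible above, is that the identifying $B$-field $B_1$ is defined only on $\{\xi\equiv1\}$ and is generically nonzero, so $\J_0$ agrees with $\J_{\phi^*\Tilde{\omega}}$ on the overlap annulus only up to a $B$-field (correspondingly, $H_0$ differs from the twist of $\J_{\phi^*\Tilde{\omega}}$ by $dB_1$ there); this flexibility is exactly what lets the surgered structure on $\Tilde{M}$ glue together in the next step of the construction, and it dictates the particular shape of the exponent of $\rho_0$.
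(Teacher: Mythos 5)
Your overall strategy is the same as the paper's: the paper offers no independent argument for this lemma, stating only that the proof is implicitly contained in the proof of Theorem 3.1 of \cite{GH16}, and your reduction --- the $\Sigma$ factor is passive because $\phi$ fixes it and $\omega_\Sigma$ is closed, so $\rho_0$ is the four-dimensional Goto--Hayano spinor wedged with $e^{i\phi^*(\omega_\Sigma)}$ --- is exactly the intended reading. Your identification of the radial substitution $s\,ds = d\log r$, the $B$-field $B_1$ on $\{\xi\equiv 1\}$, the smoothness of $z_1 e^{E}$ despite the $dz^1/z_1$ pole, and the stability statement are all correct and usefully explicit.

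There is, however, one omission worth naming: your non-degeneracy check covers only the two ends $\{\xi\equiv 1\}$ (where the condition is $(\phi^*\Tilde{\omega})^m\neq 0$) and $\{\xi\equiv 0\}$ (where you correctly extract the condition $aq\neq 0$ along $\{z_1=0\}$), but not the interpolation annulus where $0<\xi<1$. There the spinor still has nonzero degree-zero part, so one must check that the imaginary part of the exponent, with $\xi$ replaced by a value $t\in(0,1)$, is symplectic; computing its square on the $D^2\times T^2$ factor gives a multiple of $(pbt-aq)\,d\log r\wedge d\theta^{012}$. This is precisely why the construction in Section \ref{subsection:gluing} imposes $pbt-aq\neq 0$ for all $t\in[0,1]$ (your endpoint conditions $pb-aq=1$ and $aq\neq 0$ are the cases $t=1$ and $t=0$), and a complete proof should invoke that hypothesis explicitly rather than only at the endpoints. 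A smaller caveat: your claimed identity $\operatorname{Im}E=\phi^*\Tilde{\omega}-\phi^*(\omega_\Sigma)$ should be rechecked against the normalizations of \cite{GH16}; with the coordinates $z_1=re^{i\theta^0}$, $z_2=\theta^1+i\theta^2$ as you set them up, the coefficients of $d\log r\wedge d\theta^0$ and $d\log r\wedge d\theta^2$ coming from the exponent of $\rho_0$ appear to carry an extra factor of $\tfrac12$ relative to $\phi^*\Tilde{\omega}$, so either the matching holds only after adjusting constants or a further $B$-field/rescaling is implicit; this does not affect the structure of your argument but should be pinned down before asserting the $B$-field equivalence on the overlap.
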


The geometric construction consists in removing from $M$ the embedded $D^2_\epsilon \times T^2 \times \Sigma$ and in gluing back a copy of it as in \eqref{tilde:m} carrying a generalized complex structure that is $B$-field equivalent to $\J_0$. More precisely, let $\rho_\J$ be the pure spinor associated to $\J$, let $B_\J$ be the $2$-form defined by $\rho_\J$ and $B_0$ the one defined by $\rho_0$. Let $\rho$ be the pure spinor obtained from $\rho_0$ via the $B$-field transform
\[
\rho := e^{\Tilde{B}_0-B_0 + \phi^*(B_\J)} \rho_0,
\]
where $\Tilde{B}_0$ is a real $2$-form on $D^2_1 \times T^2 \times \Sigma$ that vanishes on a neighborhood of its boundary and agrees with $B_0$ on a neighborhood of $D^2_{r_0} \times T^2 \times \Sigma$. 
We set
\begin{equation}\label{eq:gls}
\Tilde{\rho}:=
\begin{cases}
\rho_\J &\text{on } M \setminus \left( D^2_{\epsilon_0} \times T^2 \times \Sigma \right), \\
& \\
\rho &\text{on } D^2_1 \times  T^2 \times \Sigma.
\end{cases}
\end{equation}
Since $\rho_\J$ and $\rho$ agree on the overlap, $\Tilde{\rho}$ defines a stable generalized complex structure $\Tilde{\J}$ on $\Tilde{M}$ that is integrable with respect to the $3$-form 
\[
\Tilde{H}:=
\begin{cases}
H &\text{on } M \setminus \left( D^2_{\epsilon_0} \times T^2 \times \Sigma \right), \\
 & \\
\phi^*(H) - d \Tilde{B}_0 &\text{on } D^2_1 \times T^2  \times \Sigma.
\end{cases}
\]

\begin{definition}\label{def:luttinger}
We say that the generalized complex structure $(\Tilde{M}, \Tilde{\J}, \Tilde{H})$ is obtained from $(M, \J, H)$ performing \emph{generalized Luttinger surgery} along $T^2 \times \Sigma$.
\end{definition}

\begin{remark}
We could have chosen to glue back a copy of $D^2_1 \times T^2 \times \Sigma$ carrying any stable generalized complex structure extending the pullback $\phi^*(\Tilde{\omega})$. For a fixed $\phi$, the choice of an extension not equivalent to $\rho_0$ will provide a different $\Tilde{\J}$ on the same underlying smooth manifold $\Tilde{M}$. Fixing the structure $\rho_0$ gives a definition of surgery that depends only on the choice of $\phi$. 
\end{remark}

\begin{remark}
Generalized Luttinger surgery presents similarities with another generalization of Luttinger surgery \emph{(\cite{Lut95}):} \emph{coisotropic Luttinger surgery} \emph{(\cite{BK13, Akh14, Tor16, Akh18})}. They are both performed along codimension $2$ submanifolds of the form $T^2 \times \Sigma$. However, they differ in the choice of the gluing diffeomorphism and in the geometric structures that they are gluing. 
\end{remark}

The main application of generalized Luttinger surgery will be in producing stable generalized complex structures, as made precise in the following theorem.

\begin{theorem}\label{main:surgery}
Let $(M^{2m}, \J, H)$ be a stable generalized complex structure and let $S$ be its type change locus. Let $(\Sigma, \omega_\Sigma)$ be a symplectic $(2m-4)$-manifold. Assume that there exists an embedded $T^2 \times \Sigma \hookrightarrow M$ such that
\begin{itemize} 
    \item [$(i)$] there exists a neighborhood $N_T$ of $T^2 \times \Sigma$ that is diffeomorphic to $D^2 \times T^2 \times \Sigma$;  
    \item [$(ii)$] $T^2 \times \Sigma$ is $\J$-symplectic and $\omega_{|T^2 \times \Sigma} = d \theta^{12} + \omega_\Sigma$, where $\omega$ is the symplectic form induced by $\J$ on $N_T$ up to $B$-field equivalence.
\end{itemize}
Then performing generalized Luttinger surgery along $T^2 \times \Sigma$ yields a stable generalized complex structure $(\Tilde{M}, \Tilde{\J}, \Tilde{H})$ whose type changes along $S \sqcup \left( T^2 \times \Sigma \right)$.
\end{theorem}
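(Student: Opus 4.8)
The plan is to verify the hypotheses needed to run the construction of Section \ref{subsection:gls}, apply it, and then identify the type change locus of the resulting structure $\Tilde{\J}$. First I would set up the neighborhood. By hypothesis $(i)$ there is a diffeomorphism $N_T \cong D^2 \times T^2 \times \Sigma$; shrinking, we may take $N_T = D^2_\epsilon \times T^2 \times \Sigma$ with coordinates $re^{i\theta^0}$ on the disk, $(\theta^1,\theta^2)$ on $T^2$, and $\sigma \in \Sigma$. By hypothesis $(ii)$, on $N_T$ the structure $\J$ is $B$-field equivalent to $\J_\omega$ for a symplectic form $\omega$ restricting to $d\theta^{12} + \omega_\Sigma$ on $T^2 \times \Sigma$. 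Since $T^2 \times \Sigma$ is $\J$-symplectic of codimension $2$, after possibly shrinking $\epsilon$ and applying a further $B$-field transform I would put $\omega$ in the normal form $\Tilde{\omega} = r\,dr\,d\theta^0 + d\theta^{12} + \omega_\Sigma$ on $N_T$; this is exactly the standing assumption opening Section \ref{subsection:gls}, and it is the analogue of the symplectic neighborhood theorem for the $\J$-symplectic submanifold $T^2 \times \Sigma$ (cf. Example \ref{main:example} and \eqref{form:omega}). I expect this normalization step — reconciling the abstract $B$-field equivalence in $(ii)$ with the explicit product form $\Tilde{\omega}$ — to be the one place where a little care is needed, though it is standard.

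Next I would invoke the construction verbatim. Choose integers $p,q,a,b$ with $pb-aq=1$ and $pbt-aq\neq 0$ for all $t\in[0,1]$ as in Section \ref{subsection:gluing}, giving the gluing diffeomorphism $\phi$ of \eqref{choice:diffeo}. By Lemma \ref{lemma:extension}, the pullback $\phi^*(\Tilde\omega)$ extends over $D^2_1 \times T^2 \times \Sigma$ to a stable generalized complex structure $B$-field equivalent to the one with pure spinor $\rho_0$. Forming $\rho$ from $\rho_0$ by the $B$-field transform $e^{\Tilde B_0 - B_0 + \phi^*(B_\J)}$ and setting $\Tilde\rho$ as in \eqref{eq:gls}, the fact that $\rho_\J$ and $\rho$ agree on the annular overlap yields a well-defined stable generalized complex structure $(\Tilde M, \Tilde\J, \Tilde H)$ on the smooth manifold $\Tilde M$ of \eqref{tilde:m}. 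By Definition \ref{def:luttinger} this is generalized Luttinger surgery along $T^2 \times \Sigma$. Stability is inherited because it is a local, open condition: on $M \setminus (D^2_{\epsilon_0}\times T^2\times\Sigma)$ we have $\Tilde\J = \J$ which is stable, and on $D^2_1 \times T^2 \times \Sigma$ the structure is $B$-field equivalent to $\rho_0$, which is stable (its zero-degree part $z_1$ cuts out its zero set transversally).

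Finally, I would locate the type change locus of $\Tilde\J$. The type of a stable structure jumps exactly along the transverse zero set of the zero-degree component of its pure spinor. On the unmodified piece $M \setminus (D^2_{\epsilon_0}\times T^2 \times\Sigma)$, the pure spinor is $\rho_\J$ and its zero set is $S \setminus (D^2_{\epsilon_0}\times T^2\times\Sigma)$; but $S \cap N_T = \{0\}\times T^2\times\Sigma$ lies inside the removed piece (the original type change locus of $\J$ restricted to $N_T$ is the zero locus of $z_1$, namely $\{r=0\}$), so the surviving part of $S$ is disjoint from the region we cut out. On the glued piece $D^2_1 \times T^2 \times \Sigma$, the zero-degree part of $\rho$ is a nonvanishing multiple of $z_1$ (the $B$-field transform does not affect the degree-zero term), so the type jumps precisely along $\{z_1 = 0\} = \{0\}\times T^2\times\Sigma$, a copy of $T^2\times\Sigma$, and it jumps from $0$ to $2$ there as for $\rho_0$. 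These two loci are disjoint, so the type change locus of $\Tilde\J$ is $S \sqcup (T^2 \times \Sigma)$, completing the proof. \qed
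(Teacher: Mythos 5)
Your proposal follows the paper's proof essentially step for step: normalize the symplectic form on $N_T$ to the product form $\Tilde{\omega}$ via the symplectic neighborhood theorem (the paper cites Weinstein explicitly), run the surgery of Section \ref{subsection:gls} using Lemma \ref{lemma:extension}, and read off the type change locus from the two pieces of $\Tilde{\rho}$. The extra detail you give on stability and on the zero-degree part of the spinor is correct and consistent with what the construction provides.

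There is, however, one wrong intermediate claim in your last paragraph: you assert that $S \cap N_T = \{0\} \times T^2 \times \Sigma$, i.e.\ that the \emph{original} structure $\J$ already changes type along the core of $N_T$. This contradicts hypothesis $(ii)$: since $T^2 \times \Sigma$ is $\J$-symplectic, $\J$ is $B$-field equivalent on a neighborhood of it to the structure induced by a genuine symplectic form, hence has constant type $0$ there, so $N_T$ (taken inside that neighborhood) is \emph{disjoint} from $S$ — this is exactly the one-line observation the paper makes. The component $\{0\} \times T^2 \times \Sigma$ of the new type change locus is created by the surgery (it is the zero locus of the degree-zero part $z_1$ of the glued-in spinor $\rho_0$), not a remnant of $S$. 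Note that your version is also internally inconsistent with your own conclusion: if $S$ really met $N_T$, the surviving locus would be $S \setminus N_T$ rather than all of $S$, and the claimed decomposition $S \sqcup (T^2 \times \Sigma)$ would fail. The correct fact $S \cap N_T = \emptyset$ rescues the argument, so the fix is a single sentence, but as written the justification of the final step is wrong.
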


\begin{proof}
Let $N_T \cong D^2_\epsilon \times T^2 \times \Sigma$ be a neighborhood of $T^2 \times \Sigma$. By assumption $\J$ induces on $N_T$ a symplectic form $\omega$ that, when restricted to $T^2 \times \Sigma$, coincides with the product one. Applying Weinstein's symplectic neighborhood theorem (cf. \cite{Wei71} or \cite{MS17}, Theorem 3.4.10), up to taking a smaller $\epsilon$, $(N_T, \omega_{|N_T})$ is symplectomorphic to 
\[
(D^2_\epsilon \times T^2 \times \Sigma, \, \Tilde{\omega}:= r \, dr \, d\theta^0 + d\theta^{12} + \omega_\Sigma).
\]
We perform generalized Luttinger surgery along $T^2 \times \Sigma$ obtaining a stable generalized complex structure $(\Tilde{M}, \Tilde{\J}, \Tilde{H})$. $T^2 \times \Sigma$ is $\J$-symplectic, thus $N_T$ can be taken to be disjoint from the type change locus $S$. The type of $\Tilde{\J}$ jumps along $S$, since $\Tilde{\J}$ agrees with $\J$ on $M \setminus N_T$, and also along $T^2 \times \Sigma$ because the structure that we glued back has type jump $0 \mapsto 2$ on $\{ 0 \} \times T^2 \times \Sigma$. 
\end{proof}

\begin{remark}\label{rem:implicit}
In \emph{\cite{Mun18}}, Theorem 3.2.7, it is implicitly assumed that symplectic forms on $\J$-symplectic $6$-tori are the product form. Setting $2m=6$, $\Sigma = T^2$ in Theorem \ref{main:surgery} and precomposing with a permutation of the coordinates on $\partial D^2 \times T^4$ (cf. Remark \ref{rem:recover}), we recover the same result with the additional hypothesis made explicit.
\end{remark}

\subsection{Examples and applications.}\label{subsection:example:surgery} We apply generalized Luttinger surgery in several instances. Our main achievements consist in showing the existence of stable generalized complex structures with non-homotopy-equivalent path-connected components of their type change locus and in enlarging the class of diffeomorphism types of manifolds admitting a stable generalized complex structure.
\vspace{.3cm}

As a first application, we exploit the fact that generalized Luttinger surgery can be performed along $T^2 \times \Sigma$ to produce stable generalized complex structures whose path-connected components of the type change locus are not pairwise homotopy-equivalent.

\begin{theorem}\label{cor:non:homotopy}
There exist stable generalized complex structures with non-homotopy-equivalent path-connected components of their type change locus.
\end{theorem}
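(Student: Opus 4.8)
The plan is to produce an explicit example by starting with a known stable generalized complex manifold and performing generalized Luttinger surgery along a $\J$-symplectic submanifold of the form $T^2\times\Sigma$ chosen so that the new type change component is \emph{not} homotopy equivalent to any component of the original type change locus. The natural building block is the local model of Example \ref{main:example}: a product $D^2\times T^2\times\Sigma$ carries a stable generalized complex structure whose type change locus is $\{0\}\times T^2\times\Sigma$. So the first step is to fix some compact stable generalized complex $(M^{2m},\J,H)$ whose type change locus $S$ has components of a definite, simple homotopy type — for instance a $6$-manifold obtained by torus surgery (as in \cite{Mun18}), where the components of $S$ are tori $T^4$ or $T^2\times\mathbb{S}^2$. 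Inside such an $M$ I would locate an embedded $T^2\times\Sigma$, with $\Sigma$ a symplectic $(2m-4)$-manifold \emph{not} homotopy equivalent to the $\Sigma$ appearing in $S$ (e.g. take $\Sigma=\Sigma_g$ a higher-genus surface when $2m=6$, whereas $S$ has $T^4$ and $T^2\times\mathbb{S}^2$ components), such that hypotheses $(i)$ and $(ii)$ of Theorem \ref{main:surgery} hold.

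The second step is to arrange that the hypotheses of Theorem \ref{main:surgery} are genuinely satisfied: one needs the $T^2\times\Sigma$ to be $\J$-symplectic with the restricted symplectic form equal to the product form $d\theta^{12}+\omega_\Sigma$ up to $B$-field equivalence, and one needs a product tubular neighborhood $D^2\times T^2\times\Sigma$. The cleanest way to guarantee this is to \emph{build} $M$ to contain such a piece: take $M_0$ to be (a torus-surgered) stable generalized complex $6$-manifold with nonempty type change locus, and take a disjoint open region of $M_0$ that already looks like the model $D^2\times T^2\times\Sigma$ from Example \ref{main:example} — for instance, realize $M$ as a connected sum or fiber-sum type construction, or simply observe that many of the stable structures in the literature (e.g. on $T^2\times\Sigma_g$, or on $M\times T^2$ for symplectic $M$) contain an obvious $\J$-symplectic $T^2\times\Sigma$ with product neighborhood. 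Applying Theorem \ref{main:surgery} then yields $(\tilde M,\tilde\J,\tilde H)$ whose type change locus is $S\sqcup(T^2\times\Sigma)$.

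The third and final step is the homotopy-inequivalence bookkeeping: I must check that the newly created component $T^2\times\Sigma$ is not homotopy equivalent to any component of $S$ (nor to any other component of the new locus). This is purely a matter of distinguishing the homotopy types of compact manifolds — e.g. by fundamental group, by homology, or by cohomology ring — and is where one must be a little careful, because one wants $T^2\times\Sigma$ and the components of $S$ to be \emph{provably} distinct. Concretely, if $S$ consists of copies of $T^4$ and $T^2\times\mathbb{S}^2$, choosing $\Sigma=\Sigma_g$ with $g\ge 2$ makes $T^2\times\Sigma_g$ have a component with nonabelian fundamental group, hence not homotopy equivalent to $T^4$ (abelian $\pi_1$) nor to $T^2\times\mathbb{S}^2$ (whose $\pi_1\cong\mathbb{Z}^2$). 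I anticipate the main obstacle is the \emph{second} step: verifying that a suitable ambient $M$ with the required $\J$-symplectic $T^2\times\Sigma_g$ actually exists and that the product-neighborhood hypothesis $(i)$ together with the product-form hypothesis $(ii)$ hold — this is really the geometric content, and the rest is either a direct appeal to Theorem \ref{main:surgery} or elementary algebraic topology. In practice I expect the paper to sidestep the difficulty by exhibiting $M$ as something of the form $M'\times T^2$ with $M'$ symplectic and a symplectic $\Sigma_g\subset M'$ in hand, so that $\Sigma_g\times T^2\subset M'\times T^2$ is automatically $\J$-symplectic with product neighborhood, making both hypotheses transparent.
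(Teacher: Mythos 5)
Your proposal is correct and follows essentially the same route as the paper: the paper takes a symplectic $4$-manifold $X$ (from \cite{BK09}, Theorem 20) containing disjoint symplectic surfaces $T\cong T^2$ and $\Sigma$ of genus $2$ with trivial normal bundles, forms $M=X\times T^2$ with the product symplectic structure, and performs \emph{two} generalized Luttinger surgeries along $T\times T^2$ and $\Sigma\times T^2$, so that both components of the type change locus ($T^4$ and $T^2\times\Sigma_2$, distinguished by $\pi_1$) are created by surgery and the hypotheses of Theorem \ref{main:surgery} are automatic --- exactly the ``sidestep'' you anticipate in your last sentence. The only cosmetic difference is that the paper starts from a structure with empty type change locus rather than a pre-existing one, which removes the need to verify anything about the ambient structure near a given component of $S$.
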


\begin{proof}
Denote by $X$ the manifold described in \cite{BK09}, Theorem 20. Recall that $X$ is a symplectic $4$-manifold containing an embedded $2$-torus $T$ and an embedded surface $\Sigma$ of genus $2$ that can be taken to be symplectic and disjoint. Both have a neighborhood diffeomorphic to $D^2 \times T$, respectively $D^2 \times \Sigma$. Consider on $M := X \times T^2$ the generalized complex structure induced by the product symplectic form and perform two generalized Luttinger surgery on the embedded, $\J$-symplectic, disjoint $T\times T^2$ and $\Sigma \times T^2$. This provides a stable generalized complex structure $(\Tilde{M}, \Tilde{\J}, \Tilde{H})$ on a $6$-manifold whose type change locus has two path connected components that are not homotopy-equivalent. Stable structures in any even dimension $2m \ge 8$ can be obtained taking products of $\Tilde{M}$ with a generalized complex $(2m-6)$-manifold of constant type $0$.
\end{proof}

\begin{remark}\label{rem:diffeo:type}
We point out that in dimension $4$ each path-connected component of the type change locus of a stable generalized complex structure is diffeomorphic to a $2$-torus, inheriting the structure of an elliptic curve \emph{(\cite{CG07}}, Theorem 2.1), thus the phenomenon described in Theorem \ref{cor:non:homotopy} is exclusive of stable structures of dimension $2m \ge 6$.
\end{remark}

Ideally, one would be able to determine the diffeomorphism type of the manifold $\Tilde{M}$ obtained via generalized Luttinger surgery, with the purpose of producing generalized complex structures on manifolds that were not previously known to admit one. In the rest of this section we obtain some partial result in this direction.

Perform a single generalized Luttinger surgery on a generalized complex manifold of the form $M \times T^2$. Since the surgery acts as the identity on the first coordinate of the $2$-torus, the resulting manifold can be written as a product $\Tilde{M} \times \S^1$. More precisely, we have the following

\begin{proposition}\label{cor:symplectic:surgery}
Let $(M^{2m-2},\J,H)$ be a stable generalized complex structure. Let $(\Sigma, \omega_\Sigma)$ be a symplectic $(2m-4)$-manifold. Assume that there exists an embedded, $\J$-symplectic $\Sigma \hookrightarrow M$ with a neighborhood diffeomorphic to $D^2 \times \Sigma$. Fix on $T^2$ a generalized complex structure induced by a symplectic form. Then performing generalized Luttinger surgery along the submanifold
\[
\Sigma \times T^2 \hookrightarrow M \times T^2,
\]
endowed with the product structure, yields a stable generalized complex structure
\[
(\Tilde{M}^{2m-1} \times \S^1, \Tilde{\J}, \Tilde{H}).
\]
\end{proposition}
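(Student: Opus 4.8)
The plan is to realize this construction as an instance of Theorem \ref{main:surgery} applied to the product $M\times T^2$, and then to split off an $\S^1$-factor by inspecting the explicit gluing map \eqref{choice:diffeo}. First I would fix the ambient structure: write $T^2$ with coordinates $(\theta^1,\theta^2)$, set $\omega_{T^2}:=d\theta^{12}$, and equip $T^2$ with the generalized complex structure $\J_{\omega_{T^2}}$; endow $M\times T^2$ with the product structure, whose pure spinor is $\rho_\J\wedge e^{i\,d\theta^{12}}$. The type of a product structure is the sum of the types of the two factors, and the distinguished anticanonical section of the product vanishes exactly where that of the $M$-factor does, with the same transversality. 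Since $\J_{\omega_{T^2}}$ has constant type $0$ and nowhere-vanishing anticanonical section, the product is again stable, with type change locus $S\times T^2$.

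Next I would verify hypotheses $(i)$ and $(ii)$ of Theorem \ref{main:surgery} for the embedded submanifold $\Sigma\times T^2\hookrightarrow M\times T^2$ (reordering the factors so that a neighborhood reads $D^2\times T^2\times\Sigma$). Hypothesis $(i)$ is immediate: $\Sigma$ has a neighborhood $U\cong D^2\times\Sigma$ in $M$, hence $U\times T^2\cong D^2\times T^2\times\Sigma$ is the required neighborhood of $\Sigma\times T^2$. For $(ii)$, $\J$-symplecticity of $\Sigma$ means $\J|_U$ is $B$-field equivalent to $\J_{\omega_M}$ for some symplectic form $\omega_M$ on $U$, and by Weinstein's symplectic neighborhood theorem we may take $\omega_M$ to restrict on $\Sigma$ to $\omega_\Sigma$ (the symplectic structure $\Sigma$ inherits as a $\J$-symplectic submanifold). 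Since $B$-field transforms commute with wedging of pure spinors and $e^{i\omega_M}\wedge e^{i\,d\theta^{12}}=e^{i(\omega_M+d\theta^{12})}$, on $U\times T^2$ the product structure is $B$-field equivalent to $\J_{\omega_M+d\theta^{12}}$, whose induced symplectic form restricts on $\Sigma\times T^2$ to $d\theta^{12}+\omega_\Sigma$, as required. In particular $\Sigma\times T^2$ is $\J$-symplectic, so $U\times T^2$ may be chosen disjoint from $S\times T^2$.

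Theorem \ref{main:surgery} now applies and produces a stable generalized complex structure $(\widetilde{M\times T^2},\Tilde\J,\Tilde H)$ whose type changes along $(S\times T^2)\sqcup(\Sigma\times T^2)$. It remains to identify the underlying smooth manifold with a product $\Tilde M^{2m-1}\times\S^1$. By construction $\widetilde{M\times T^2}$ is obtained from $(M\times T^2)\setminus(D^2_{\epsilon_0}\times T^2\times\Sigma)$ by regluing $D^2_1\times T^2\times\Sigma$ along the diffeomorphism $\phi$ of \eqref{choice:diffeo}. Since $\phi$ acts as the identity on the coordinate $\theta^1$, the entire cut-and-paste is the product with $\S^1_{\theta^1}$ of the corresponding cut-and-paste performed on $M\times\S^1_{\theta^2}$ along $\Sigma\times\S^1_{\theta^2}$; calling the resulting $(2m-1)$-manifold $\Tilde M$ (note $(2m-2)+1=2m-1$), one obtains $\widetilde{M\times T^2}\cong\Tilde M^{2m-1}\times\S^1$, which therefore carries the stable generalized complex structure just built.

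The only step that is more than bookkeeping is hypothesis $(ii)$: one must check that the exterior product of (a $B$-field image of) a symplectic pure spinor with a symplectic pure spinor is again, up to a $B$-field transform, the pure spinor of the sum of the two forms, and that the symplectic form on $\Sigma$ can be normalized to $\omega_\Sigma$. The first is immediate from $e^{i\omega_1}\wedge e^{i\omega_2}=e^{i(\omega_1+\omega_2)}$ together with the compatibility of $B$-field transforms with wedging, and the second is Weinstein's symplectic neighborhood theorem, applied exactly as in the proof of Theorem \ref{main:surgery}.
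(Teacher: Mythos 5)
Your proposal is correct and follows the same route the paper takes: the paper justifies this proposition by the remark immediately preceding it, namely that one applies Theorem \ref{main:surgery} to the product structure on $M\times T^2$ and observes that the gluing diffeomorphism \eqref{choice:diffeo} acts as the identity on $\theta^1$, so the surgered manifold splits as $\Tilde{M}^{2m-1}\times\S^1$. Your additional verification of hypotheses $(i)$ and $(ii)$ (product stability, the identity $e^{i\omega_1}\wedge e^{i\omega_2}=e^{i(\omega_1+\omega_2)}$, and the Weinstein normalization) is sound and simply makes explicit what the paper leaves implicit.
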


In dimension $2m = 6$, under some assumptions on the manifold $M$ and the diffeomorphism $\phi$, it is possible to obtain more information on the diffeomorphism type of $\Tilde{M}$.

\begin{proposition}\label{partial:diffeo:type}
Let $(M^4,\J,H)$ be a stable generalized complex structure as in Proposition \ref{cor:symplectic:surgery}. Assume that both $M$  and the complement $M \setminus \left( D^2 \times \Sigma \right)$ are simply connected. Let $g$ be the genus of $\Sigma$ and set $k = b_2(M) +2g-1$. Perform a generalized Luttinger surgery with parameters $p=0$, $a = 1$, $q = 1$, $b=0$ along the submanifold
\[
\Sigma \times T^2 \hookrightarrow M \times T^2
\]
to obtain a stable generalized complex structure on $\Tilde{M} \times \S^1$. The resulting manifold is either diffeomorphic to
\begin{equation}\label{conn:sum}
\left( \#_k \, \S^2 \times \S^3 \right ) \times \S^1,
\end{equation}
when $\Tilde{M}$ is spin, or to
\begin{equation}\label{twisted:conn:sum}
\left( \S^2 \Tilde{\times} \S^3 \# \left( \#_{k-1} \, \S^2 \times \S^3 \right ) \right) \times \S^1,
\end{equation}
when $\Tilde{M}$ is non-spin.

\end{proposition}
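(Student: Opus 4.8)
The plan is to recognize $\Tilde{M}$ as a standard closed simply connected $5$-manifold and then invoke the Smale--Barden classification. Because the chosen parameters $p=0$, $a=1$, $q=1$, $b=0$ make the gluing map $\phi$ of \eqref{choice:diffeo} the identity in the $\theta^1$-direction, Proposition \ref{cor:symplectic:surgery} already presents the surgered manifold as $\Tilde{M} \times \S^1$ with $\Tilde{M}$ a closed $5$-manifold, so it is enough to pin down $\Tilde{M}$. Writing $W := M \setminus (D^2 \times \Sigma)$, so that $\partial W \cong \S^1 \times \Sigma$, the construction exhibits $\Tilde{M}$ as $(W \times \S^1) \cup_\psi (D^2 \times \Sigma \times \S^1)$, where on the overlap $\simeq T^2 \times \Sigma$ the attaching diffeomorphism $\psi$ interchanges the meridian $\partial D^2$ of $\Sigma$ in $M$ with the circle factor coming from the coordinate $\theta^2$ --- this being the effect of the matrix $\left(\begin{smallmatrix} 0 & 0 & 1 \\ 0 & 1 & 0 \\ 1 & 0 & 0 \end{smallmatrix}\right)$. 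Since $M$ and $W$ are simply connected by hypothesis, the $\pi_1$-computation of Example \ref{ex:kill} gives $\pi_1 \Tilde{M} \cong \pi_1 W \cong 1$; thus $\Tilde{M}$ is a closed simply connected smooth (hence orientable) $5$-manifold, and all that remains is to determine its integral homology.

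I would compute the homology in two steps. First, for $W$: as $\Sigma$ has trivial normal bundle, excision together with the Thom isomorphism gives $H_n(M,W) \cong H_{n-2}(\Sigma)$, and feeding this into the long exact sequence of the pair $(M,W)$ and using $H_1(W) = 0$ and $H_3(M) = 0$ yields $H_0(W) = \Z$, $H_2(W) \cong \Z^{b_2(M) + 2g - 1} = \Z^k$, and $H_j(W) = 0$ for $j \neq 0,2$. The mechanism is that $H_2(M) \to H_2(M,W) \cong \Z$ is surjective --- which is forced by $H_1(W) = 0$ --- so its kernel is free of rank $b_2(M) - 1$, while $H_3(M,W) \cong H_1(\Sigma) \cong \Z^{2g}$ supplies the remaining $2g$ generators of $H_2(W)$; no torsion can enter since the neighbouring groups in the sequence are free. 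Second, I would run the Mayer--Vietoris sequence of $\Tilde{M} = (W \times \S^1) \cup (D^2 \times \Sigma \times \S^1)$ with overlap $\simeq T^2 \times \Sigma$. The crucial feature of $\psi$ is that the two circle classes in $H_1(T^2 \times \Sigma)$ behave oppositely on the two pieces: the $\theta^2$-circle is the $\S^1$-factor of $W \times \S^1$ but bounds a disk in $D^2 \times \Sigma \times \S^1$, whereas the meridian bounds in $W \times \S^1$ (because $\pi_1 W = 1$) but is the $\S^1$-factor of $D^2 \times \Sigma \times \S^1$. Hence the connecting map on $H_1$ is an isomorphism, so $H_2(\Tilde{M})$ equals the cokernel of the connecting map on $H_2$; inspecting the latter one sees that its image is free of rank $2g + 1$, spanned by the classes coming from the product $\S^1 \times H_1(\Sigma)$ together with one further primitive vector, so the cokernel is naturally identified with $H_2(W) \cong \Z^k$, in particular torsion-free. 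Poincar\'e duality and the universal coefficient theorem then give that $H_*(\Tilde{M})$ is $\Z, 0, \Z^k, \Z^k, 0, \Z$ in degrees $0, \dots, 5$.

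Consequently $\Tilde{M}$ is a closed simply connected $5$-manifold whose $H_2$ is free of rank $k$, so by the Smale--Barden classification of such manifolds it is diffeomorphic to $\#_k \, \S^2 \times \S^3$ when $w_2(\Tilde{M}) = 0$ and to $\S^2 \Tilde{\times} \S^3 \# \bigl( \#_{k-1} \, \S^2 \times \S^3 \bigr)$ when $w_2(\Tilde{M}) \neq 0$; these are exactly the spin and the non-spin cases, and in the latter $w_2 \neq 0$ forces $H^2(\Tilde{M} ; \Z_2) \neq 0$, whence $k \geq 1$ and the connected sum is meaningful. Multiplying by $\S^1$ yields the two asserted diffeomorphism types for $\Tilde{M} \times \S^1$. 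I expect the main obstacle to be the Mayer--Vietoris bookkeeping of the second step: one must track carefully how the non-standard gluing $\psi$ --- which is precisely where the choice $p=0$, $a=1$, $q=1$, $b=0$ genuinely enters --- acts on $H_*(T^2 \times \Sigma)$, and check that no torsion is introduced into $H_2(\Tilde{M})$ (equivalently, that the surgery produces no $X_j$- or Wu-manifold summands), since it is this torsion-freeness that confines $\Tilde{M}$ to the short list of manifolds appearing in the statement.
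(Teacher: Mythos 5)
Your proposal is correct and follows essentially the same route as the paper: Seifert--van Kampen (via Example \ref{ex:kill}) for simple connectivity, a Mayer--Vietoris computation combined with Poincar\'e duality to get $H_2(\Tilde{M};\Z)\cong H_3(\Tilde{M};\Z)\cong\Z^k$, and the Smale--Barden classification of simply connected $5$-manifolds to conclude. The only difference is that the paper leaves the homology computation as "straightforward," whereas you carry it out explicitly (correctly tracking the rim tori and the swap of the meridian with the $\theta^2$-circle).
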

\begin{proof}
Using Seifert-van Kampen's theorem (cf. Example \ref{ex:kill}), we see that $\Tilde{M}$ is simply connected. Using Poincaré duality and a Mayer-Vietoris argument, it is straightforward to compute
\[
H_2 (\Tilde{M}; \Z) \cong H_3 (\Tilde{M}; \Z) \cong \Z^k,
\]
where $k = b_2(X) + 2g -1$. We conclude the proof applying classification results for simply connected $5$-manifolds proved in \cite{Sma62} and \cite{Bar65}, that allow to determine the diffeomorphism type of $\Tilde{M}$ knowing its homology, its linking form and whether or not it is spin. If $\Tilde{M}$ is spin, then
\[
\Tilde{M} \cong \#_k \, \S^2 \times \S^3.
\]
If $\Tilde{M}$ is non-spin, then
\[
\Tilde{M} \cong \S^2 \Tilde{\times} \S^3 \# \left( \#_{k-1} \, \S^2 \times \S^3 \right ).
\]
\end{proof}

Using Proposition \ref{partial:diffeo:type}, producing stable generalized complex structures on $6$-manifolds with diffeomorphism type \eqref{conn:sum} or \eqref{twisted:conn:sum} reduces to a symplectic geography problem, as shown in the next example.

\begin{example}
Let $X$ be a simply connected, symplectic $4$-manifold containing an embedded symplectic surface $\Sigma$ of genus $g \ge 0$ with trivial normal bundle and such that $X \setminus D^2 \times \Sigma$ is simply connected. Suppose that $\Sigma$ intersects transversely a surface $S$ of self-intersection $-1$, at a single point $x$. In particular, the normal bundle of $S$ is not trivial. \\
Endow $X \times T^2$ with a product symplectic form and perform generalized Luttinger surgery along $\Sigma \times T^2$ as in Proposition \ref{partial:diffeo:type} obtaining a stable generalized complex structure on $\Tilde{M} \times \S^1$. The manifold $\Tilde{M}$ is simply connected and non-spin, since the surface obtained removing from $S$ a disk around $x$ and gluing it back via generalized Luttinger surgery still has non-trivial normal bundle. This produces a stable structure on 
\[
\left( \S^2 \Tilde{\times} \S^3 \# \left( \#_{b_2(X) + 2g -2} \, \S^2 \times \S^3 \right ) \right) \times \S^1.
\]
Taking explicit examples of manifolds satisfying our assumptions we have that:
\begin{itemize}
    \item the elliptic surface $E(1)$, whit $\Sigma$ given by the generic fiber and $S$ by a cusp neighborhood, yields a stable generalized complex structure on
    \[
    \left( \S^2 \Tilde{\times} \S^3 \# \left( \#_{10} \, \S^2 \times \S^3 \right ) \right) \times \S^1;
    \]
    \item the manifold $X$ obtained in \cite{BK09}, Theorem 13, with $\Sigma$ given by a surface of genus $2$ and $S$ by a torus denoted by $H_1$, yields a stable structure on
    \[
    \left( \S^2 \Tilde{\times} \S^3 \# \left( \#_6 \, \S^2 \times \S^3 \right ) \right) \times \S^1.
    \]
\end{itemize}
\end{example}
The construction can be slightly modified asking $\Sigma$ to be disjoint from $S$, leading to the same conclusion as above. For example, this is the case of the manifold $B$ obtained in \cite{BK09}, Theorem 18, with $\Sigma$ and $S$ both given by disjoint tori. This produces a stable structure on 
\[
\left( \S^2 \Tilde{\times} \S^3 \# \left( \#_8 \, \S^2 \times \S^3 \right ) \right) \times \S^1.
\]

\section{Changing the gluing diffeomorphism.}\label{subsection:change:gluing} We show how generalized Luttinger surgery can be modified choosing a gluing diffeomorphism different from \eqref{choice:diffeo}, that does not necessarily act as the identity on $\Sigma$. The general procedure is as follows:
\begin{itemize}
    \item Pick a diffeomorphism $\phi \colon A(r_0, 1) \times T^2 \times \Sigma \rightarrow A(\epsilon_0, \epsilon) \times T^2 \times \Sigma$. This gives a lot of freedom of choice. For example, $\phi$ can act on $D^2 \times T^2$ as torus surgery of multiplicity $p$ and on $\Sigma$ as any self-diffeomorphism.
    
    \item Fix on $D^2_\epsilon \times T^2 \times \Sigma$ a symplectic form $\omega$. Assume that we can extend $\phi^*( \omega)$ to a stable generalized complex structure $\J_0$ on $D^2_1 \times T^2 \times \Sigma$.
    
    \item Suppose that $(M, \J, H)$ is a stable generalized complex manifold admitting an embedded, $\J$-symplectic $D^2 \times T^2 \times \Sigma$ on which $\J$ is $B$-field equivalent to the generalized structure induced by $\omega$. Remove $D^2 \times T^2 \times \Sigma$ and glue it back via $\phi$, carrying the generalized complex structure $\J_0$ extending $\phi^* (\omega)$, as we did in Section \ref{subsection:gls}. This endows the resulting manifold $\Tilde{M}$ with a stable generalized complex structure $\Tilde{J}$.
\end{itemize}

\subsection{Generalized Gluck twist}\label{subsection:gluck}
As an example of the general procedure, we pick a specific diffeomorphism and obtain a surgery that we call \emph{generalized Gluck twist}. Suppose that 
\[
(\Sigma, \, \omega_\Sigma) \cong (R \times \S^2, \, \omega_R + \omega_{\S^2}),
\]
where $(R, \omega_R)$ is a symplectic $(2m-6)$ manifold. The gluing diffeomorphism is obtained combining generalized Luttinger surgery and Gluck twist as we explain now.\\
Denote by $(h, \theta^S)$ cylindrical coordinates on the sphere. Suppose that $X$ is a $4$-manifold admitting an embedded $2$-sphere with trivial normal bundle. The manifold obtained from $X$ via Gluck twist is the manifold
\[
\Tilde{X}:= \left( X \setminus D^2 \times \S^2 \right) \cup_\tau D^2 \times \S^2,
\]
where $\tau$ is the diffeomorphism of the boundary $\partial D^2 \times \S^2$ given by
\[
\tau ( \theta^0, \, h, \, \theta^S) = ( \theta^0, \, h, \, \theta^S + \theta^0).
\]
Suppose now that $(M^{2m}, \J, H)$ is a stable generalized complex manifold admitting an embedded, $\J$-symplectic $D^2_\epsilon \times T^2 \times R \times \S^2$ such that on it $\J$ is $B$-field equivalent to the structure induced by the symplectic form
\[
\omega:= r \, dr \, d\theta^0 + d \theta^{12} + \omega_R + dh \, d\theta^S .
\]
Consider the manifold
\begin{equation*}\label{gluing:gluck}
\Tilde{M} := M \setminus \left( D^2_{\epsilon_0} \times T^2 \times R \times \S^2 \right) \cup_\phi (D^2_1 \times T^2 \times R \times \S^2),
\end{equation*}
obtained removing the embedded submanifold and gluing it back identifying the annuli via the diffeomorphism $\phi$, defined by
{\small
\[
  \phi \left( r e^{i \theta^0}, \, \theta^1, \, \theta^2, \sigma, h, \theta^S \right) = \left( \sqrt{ 2 \log(r \epsilon )} \, e^{i \left(  p\theta^0 + a \theta^2 \right)}, \, \theta^1, \, q\theta^0 + b\theta^2, \varphi (\sigma), h, \theta^S + \theta^0 \right),
\]
}

where $\sigma \in R$, $p,q,a,b$, are as in Section \ref{subsection:gls} and $\varphi$ is any self-diffeomorphism of $R$. Note that $\phi$ restricts to torus surgery of multiplicity $p$ on $D^2 \times T^2$ and to Gluck twist on $D^2 \times \S^2$. The symplectic form $\phi^*(\omega)$ can be extended from $A(r_0, 1) \times T^2 \times R \times \S^2 $ to a stable generalized complex structure on $D^2_1 \times T^2 \times R \times \S^2$, obtained modifying the pure spinor $\rho_1$ defined in \cite{Mun18}, proof of Theorem 3.3.2. We set
\begin{align*}
    \Tilde{\rho}_1 = z_1 \, \exp \Big( &-\frac{p}{4} \xi ( \abs{z_1}^2 ) \frac{dz^{1\bar 1}}{\abs{z_1}^2} -\frac{b}{2} \, dz^{2 \bar 2} +\\
    & \frac{dz^1}{2 z_1} \left[ (\frac{a}{2}-q)dz^2 - (\frac{a}{2}+q) dz^{\bar 2} \right] -\\
    & \frac{2}{(1 + \abs{z_3}^2)^2} \, dz^2 \left[ \bar{z}_3 dz^3 + z_3 dz^{\bar 3} \right] \Big),
\end{align*}
where $z_1$, $z_2$ are complex coordinates on $D^2$, $T^2$ respectively, and $z_3$ is the complex coordinate on $\S^2$ given by the stereographic projection.

\begin{lemma}
Consider on $D^2_1 \times T^2 \times R \times \S^2$ the stable generalized complex structure $\J_0$ defined by the pure spinor
\[
\rho_0 = \Tilde{\rho}_1 \wedge e^{i \phi^*(\omega_R)}.
\]
The pullback form $\phi^*(\omega)$ can be extended to a stable generalized complex structure on $D^2_1 \times T^2 \times R \times \S^2$ that is $B$-field equivalent to $\J_0$.
\end{lemma}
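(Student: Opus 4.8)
The plan is to establish the lemma in two stages: first, that $\rho_0 = \Tilde{\rho}_1 \wedge e^{i\phi^*(\omega_R)}$ is a smooth, nondegenerate pure spinor whose zero-degree part is transverse to the zero section of the anticanonical bundle, so that $\J_0$ is a genuine stable generalized complex structure; and second, that on the overlap annulus $A(r_0,1)\times T^2\times R\times\S^2$ — where the cutoff $\xi$ is identically $1$ — the structure $\J_0$ is $B$-field equivalent to the one induced by $\phi^*(\omega)$. Granting both, extending $\phi^*(\omega)$ over all of $D^2_1\times T^2\times R\times\S^2$ by $\J_0$ gives the stable extension claimed.

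For the first stage, the point is that $\Tilde{\rho}_1$ is the superposition of two model spinors whose computations are already available: the torus-surgery spinor $\rho_0$ of Lemma \ref{lemma:extension} (before wedging with $e^{i\phi^*(\omega_\Sigma)}$), which contributes the terms in $z_1,z_2$, and the Gluck-twist spinor $\rho_1$ of \cite{Mun18}, proof of Theorem 3.3.2, which contributes the $z_2,z_3$-term. Smoothness over $\{z_1=0\}$ goes exactly as in Lemma \ref{lemma:extension}: the term carrying $\xi(\abs{z_1}^2)\abs{z_1}^{-2}$ is smooth because $\xi$ vanishes near the origin, and the term $\frac{dz^1}{2z_1}[\cdots]$ survives the overall factor $z_1$ only through its linear occurrence in the exponential — higher powers contain $(dz^1)^2=0$ — leaving the smooth form $\frac12\,dz^1\wedge[\cdots]$; the $z_3$-term is the smooth global form on $\S^2$ used in \cite{Mun18}, and $\phi^*(\omega_R)$ is symplectic because $\varphi$ is a diffeomorphism. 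Since each monomial in the exponent carries at most one factor $dz^2$ — the only coordinate common to the two blocks — every cross term between them that could feed into $d_{H_0}\Tilde{\rho}_1$ vanishes, so the Clifford equation $d_{H_0}\Tilde{\rho}_1 = (X+\xi)\cdot\Tilde{\rho}_1$ holds for a suitable section $X+\xi$ of $E$ and a closed $3$-form $H_0$ obtained by adding the contributions of \cite{Mun18} and of the proof of Lemma \ref{lemma:extension}. Wedging with the closed form $e^{i\phi^*(\omega_R)}$ over the symplectic factor $R$ preserves integrability and leaves the zero-degree part equal to $z_1$, which is transverse to the zero section; hence $\J_0$ is stable, with type change locus $\{0\}\times T^2\times R\times\S^2$ and type jump $0\mapsto 2$ there (nondegeneracy on $\{z_1=0\}$ being guaranteed by the choice of $p,q,a,b$).

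For the second stage, I would set $\xi\equiv 1$ and expand $\rho_0$ on the overlap annulus in the pulled-back coordinates $z_1 = re^{i\theta^0}$, $\theta^1$, $q\theta^0+b\theta^2$, $\varphi(\sigma)$, $h$, $\theta^S+\theta^0$; removing the real part of the resulting logarithmic $2$-form by a $B$-field transform — combining the computation of \cite{Mun18} for the $z_3$-block with the computation leading to \eqref{form:omega} for the $z_1,z_2$-block — recovers precisely the type-$0$ structure induced by $\phi^*(\omega) = \phi^*(r\,dr\,d\theta^0 + d\theta^{12} + \omega_R + dh\,d\theta^S)$. Thus $\J_0$ restricted to the overlap is $B$-field equivalent to $\J_{\phi^*(\omega)}$, which is exactly what the lemma asserts.

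I expect the only genuine work to be the bookkeeping in the second half of the first stage and in the second stage: checking that the $z_3$-block and the $z_1,z_2$-block decouple both in the integrability equation on the transition region where $\xi$ is non-constant, and in the $B$-field computation on the overlap, so that the two known model computations can simply be added rather than redone. This decoupling rests entirely on $z_2$ being the unique coordinate shared by the two blocks and appearing in each at most linearly.
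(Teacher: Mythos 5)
Your proposal is correct and follows essentially the route the paper intends: the paper gives no written proof of this lemma, deferring to the computation of \cite{Mun18}, proof of Theorem 3.3.2, modified exactly as you describe — superposing the torus-surgery block of Lemma \ref{lemma:extension} with the Gluck-twist block on $\S^2$ and wedging with $e^{i\phi^*(\omega_R)}$, then removing the real part on the overlap annulus by a $B$-field transform. The only point worth tightening is your decoupling justification: rather than arguing that cross terms vanish because $dz^2$ appears at most once per monomial (some such cross terms, e.g.\ those involving $dz^{\bar 2}\wedge dz^2$, do survive in the exponential), the cleaner reason is that the $\S^2$-summand of the exponent is a closed global $2$-form, so it contributes nothing new to $d_{H_0}\Tilde{\rho}_1$ and the integrability equation reduces to the two model computations you cite.
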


Following Section \ref{subsection:gls}, we can glue $\J$ and $\J_0$ to endow the manifold $\Tilde{M}$ with a stable generalized complex structure $\Tilde{\J}$.

\begin{definition}
We say that the generalized complex structure $(\Tilde{M}, \Tilde{\J}, \Tilde{H})$ is obtained from $(M, \J, H)$ performing \emph{generalized Gluck twist} along $T^2 \times R \times \S^2$.
\end{definition}

Several results proved for generalized Luttinger surgery hold also for generalized Gluck twist with analogous proofs. For instance, Theorem \ref{main:surgery} corresponds to the following result.

\begin{theorem}
Let $(M^{2m}, \J, H)$ be a stable generalized complex structure and let $S$ be its type change locus. Let $(R, \omega_R)$ be a symplectic $(2m-6)$-manifold. Assume that there exists an embedded $T^2 \times R \times \S^2 \hookrightarrow M$ such that
\begin{itemize} 
    \item [$(i)$] there exists a neighborhood $N_T$ of $T^2 \times R \times \S^2$ that is diffeomorphic to $D^2 \times T^2 \times R \times \S^2$;  
    \item [$(ii)$] $T^2 \times R \times \S^2$ is $\J$-symplectic and $\omega_{|T^2 \times R \times \S^2} = d \theta^{12} + dh \, d\theta^S + \omega_R$, where $\omega$ is the symplectic form induced by $\J$ on $N_T$ up to $B$-field equivalence.
\end{itemize}
Then performing generalized Gluck twist along $T^2 \times R \times \S^2$ yields a stable generalized complex structure $(\Tilde{M}, \Tilde{\J}, \Tilde{H})$ whose type changes along $S \sqcup ( T^2 \times R \times \S^2)$.
\end{theorem}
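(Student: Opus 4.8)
The plan is to repeat, almost verbatim, the proof of Theorem \ref{main:surgery}, with the product model of Section \ref{subsection:gluck} playing the role of $\Tilde{\omega}$ and the lemma preceding this statement playing the role of Lemma \ref{lemma:extension}.

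First I would take a neighborhood $N_T \cong D^2_\epsilon \times T^2 \times R \times \S^2$ of $T^2 \times R \times \S^2$ as in hypothesis $(i)$. By hypothesis $(ii)$, after a $B$-field transform the restriction of $\J$ to $N_T$ is the generalized complex structure induced by a symplectic form $\omega$ with $\omega_{|T^2 \times R \times \S^2} = d\theta^{12} + dh\,d\theta^S + \omega_R$. The tubular neighborhood is a product with a $D^2$ factor, so the symplectic normal bundle of $T^2 \times R \times \S^2$ in $M$ is trivial; Weinstein's symplectic neighborhood theorem (cf. \cite{Wei71} or \cite{MS17}, Theorem 3.4.10) then yields, after shrinking $\epsilon$, a symplectomorphism
\[
(N_T, \omega_{|N_T}) \cong \left( D^2_\epsilon \times T^2 \times R \times \S^2, \; \omega := r\,dr\,d\theta^0 + d\theta^{12} + \omega_R + dh\,d\theta^S \right).
\]

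Next I would carry out the construction of Section \ref{subsection:gluck}: fix the gluing diffeomorphism $\phi$, pull $\omega$ back to the annulus $A(r_0,1) \times T^2 \times R \times \S^2$, and invoke the lemma above to extend $\phi^*(\omega)$ to a stable generalized complex structure on $D^2_1 \times T^2 \times R \times \S^2$ that is $B$-field equivalent to $\J_0$. Gluing this piece to $(M \setminus (D^2_{\epsilon_0} \times T^2 \times R \times \S^2), \rho_\J)$ by the same recipe as in Section \ref{subsection:gls} — including the corrective $B$-field transform $e^{\Tilde{B}_0 - B_0 + \phi^*(B_\J)}$ that makes the two pure spinors agree on the overlap — produces a stable generalized complex structure $(\Tilde{M}, \Tilde{\J}, \Tilde{H})$, which by definition is the generalized Gluck twist along $T^2 \times R \times \S^2$.

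It remains to identify where the type jumps. Since $T^2 \times R \times \S^2$ is $\J$-symplectic, $N_T$ can be chosen disjoint from $S$, so $\Tilde{\J}$ coincides with $\J$ on $M \setminus N_T$ and its type still jumps along $S$. On the glued copy of $D^2_1 \times T^2 \times R \times \S^2$ the pure spinor has the shape $z_1 \exp(\beta) \wedge e^{i\phi^*(\omega_R)}$ for a smooth $2$-form $\beta$, hence near $\{z_1 = 0\}$ it is $B$-field equivalent to the stable local model of Corollary \ref{cor:local:stable}; consequently the type jumps from $0$ to $2$ exactly along $\{0\} \times T^2 \times R \times \S^2$. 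Therefore the type of $\Tilde{\J}$ changes along $S \sqcup (T^2 \times R \times \S^2)$, as claimed. The only step that is not a mechanical transcription of the $T^2 \times \Sigma$ case is the application of Weinstein's theorem — specifically, checking that triviality of the chosen tubular neighborhood upgrades to triviality of the symplectic normal bundle, so that the product form displayed above is the correct normal-form target; once this is in place the argument is formally identical to that of Theorem \ref{main:surgery}.
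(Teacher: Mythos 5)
Your proposal is correct and follows exactly the route the paper intends: the paper gives no separate proof for this theorem, stating only that it follows from the proof of Theorem \ref{main:surgery} with an analogous argument, and your write-up is precisely that transcription (Weinstein neighborhood theorem for the product normal form, the extension lemma of Section \ref{subsection:gluck} in place of Lemma \ref{lemma:extension}, and the gluing of Section \ref{subsection:gls}). Your added remarks on the local model near $\{z_1=0\}$ and on the triviality of the symplectic normal bundle are consistent with the paper's setup.
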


\begin{example}
Let $(X^{2m}, \J, H)$ be a stable generalized complex structure. Assume that $X$ contains an embedded, $\J$-symplectic $T^2 \times R$ with trivial normal bundle. Endow $M:= X \times \S^2$ with the product generalized complex structure induced by a symplectic form on $\S^2$. Performing generalized Gluck twist along $T^2 \times R \times \S^2 \hookrightarrow M$ yields a stable generalized complex structure on a manifold $\Tilde{M}$ which has the structure of a fiber bundle with fiber $\S^2$ and base $\Tilde{X}$, where $\Tilde{X}$ is the manifold obtained performing the surgery along $T^2 \times R \hookrightarrow X$ induced by the restriction of $\phi$ to the boundary $\partial D^2 \times T^2 \times R$.
\end{example}

\begin{remark}\label{rem:recover}
In some special cases, our surgeries recover known techniques. Allowing in Section \ref{subsection:gluing} $\Sigma$ to be a point, generalized Luttinger surgery coincides with torus surgery of multiplicity $p$. Similarly, allowing in Section \ref{subsection:gluck} $R$ to be a point, generalized Gluck twist reduces to the surgery of Theorem 3.3.2 in \cite{Mun18}.
\end{remark}

\section{Branched coverings of generalized complex manifolds}\label{section:branched:coverings}

We begin this section recalling the notion of branched coverings and we give some basic example. Then, we add the geometric structure and show that branched coverings of stable generalized complex manifolds with $\J$-symplectic branch locus are still stable generalized complex. Finally we explain how to obtain stable generalized complex structures combining the techniques we described.

\subsection{Branched coverings.} Let $M$ and $\Tilde{M}$ be $n$-manifolds and $d \in \N$.

\begin{definition} 
A \emph{$d$-fold branched covering} of $M$ is a proper, smooth map $\pi \colon \Tilde{M} \longrightarrow M$ with critical set $B\subset M$ such that
\begin{itemize}
    \item [$(i)$] outside of $B$, the restriction of $\pi$
    \[
    \pi|_{\Tilde{M} \setminus \pi^{-1}(B)} \colon \Tilde{M} \setminus \pi^{-1}(B) \longrightarrow M \setminus B
    \]
     is a covering map of degree $d$;
    \item[$(ii)$] for each $p\in \pi^{-1}(B)$ there are local coordinate charts $U$, $V \longrightarrow \C\times \R^{n - 2}_+$ around $p$ and $\pi(p)$, respectively, on which $\pi$ has the form
    \[
    (z, x) \longmapsto (z^t, x)
    \]
    for some $t\in \N$, $t \ge 2$. 
\end{itemize}
The set $B$ is called the \emph{branch locus} of $\pi$ and the integer $t$ is the \emph{branching index} of $\pi$ at the point $p \in B$. 
\end{definition}

We give some basic example of branched coverings of smooth manifolds that will be useful in building explicit examples in generalized complex geometry. 

\begin{example}[Branched covering of product of surfaces]\label{example:branched:surfaces}
Let $\Sigma$ and $\Tilde{\Sigma}$ be closed orientable surfaces of genus $g$, $\Tilde{g} \ge 0$, respectively. If there exists a branched covering  
\begin{equation}
\pi \colon \Tilde{\Sigma} \longrightarrow \Sigma,
\end{equation}
the Riemann-Hurwitz formula implies that $\Tilde{g} \ge g$. Viceversa, if $\Tilde{g} \ge g$, $\Tilde{\Sigma}$ can always be realized as a branched covering of $\Sigma$. In such a case, its branch locus consists in a finite number $k$ of isolated points $\{ x_1, \dots, x_k\}$. As a consequence, we can build branched coverings of $4$-manifolds that are products of two surfaces
\[
\psi \colon \Sigma_h \times \Tilde{\Sigma} \longrightarrow \Sigma_h \times \Sigma,
\]
where $\psi$ is obtained extending $\pi$ to act as the identity on $\Sigma_h$. Their branch locus is $B = \Sigma_h \times \{ x_1, \dots, x_k\}$.
\end{example}

\begin{example}[\cite{GS99}, p. 245]\label{ex:elliptic}
Let $E(n)$ be a simply connected elliptic surface. There is a 2-fold branched covering
\[
E(2n)\rightarrow E(n)
\]
with branch locus a pair of generic regular fibers.
\end{example}

\subsection{The geometric structure.} We start from a given stable generalized complex structure on a $2m$-manifold $M$, with $2m \ge 4$. We use a branched covering $\pi \colon \Tilde{M} \rightarrow M$ to build a stable generalized complex structure on $\Tilde{M}$. This is straightforward when the branch locus is empty, i.e. for genuine covering maps, as proved in the following

\begin{lemma}\label{GCS:pullback}
Let $(M, \J, H)$ be a stable generalized complex structure whose type change locus has $k \in \N$ path-connected components. Let $\pi \colon \Tilde{M} \longrightarrow M$ be a $d$-fold covering map of $M$. Then there exists a stable generalized complex structure $(\Tilde{M}, \Tilde{\J}, \Tilde{H})$ whose type change locus has $d \cdot k$ path-connected components.
\end{lemma}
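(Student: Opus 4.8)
The idea is to pull back everything along the covering map, since a covering is a local diffeomorphism and generalized complex structures, pure spinors, the Courant bracket, and $B$-field transforms are all local notions. First I would work with the pure-spinor description: let $\rho$ be a local generator of the canonical line bundle $K$ of $\J$, integrable with respect to $H$, and set $\Tilde{\rho} := \pi^*\rho$, $\Tilde{H} := \pi^*H$. Because $\pi$ is a local diffeomorphism, $\pi^*$ commutes with $d$ and with wedge products, so $d_{\Tilde H}\Tilde\rho = \pi^*(d_H\rho) = \pi^*\big((X+\xi)\cdot\rho\big)$; pulling back the vector field and the one-form (using $d\pi$, which is invertible on each sheet) gives a section $\Tilde X + \Tilde\xi$ of $T\Tilde M \oplus T^*\Tilde M$ with $d_{\Tilde H}\Tilde\rho = (\Tilde X + \Tilde\xi)\cdot\Tilde\rho$, which is the integrability condition. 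Non-degeneracy ($\Omega\wedge\bar\Omega\wedge\omega^{m-k}\neq 0$) is preserved because $\pi^*$ of a nonvanishing top form is nonvanishing where $\pi$ is a local diffeomorphism, i.e. everywhere. Thus $\Tilde\rho$ defines a generalized complex structure $(\Tilde M,\Tilde\J,\Tilde H)$; one should also note the local generators patch, since $\pi^*$ of a transition function is again a nonvanishing function, so $\pi^*K$ is a genuine line bundle $\Tilde K$.

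Next I would check stability. The anticanonical section $s = \rho_0$ (zero-degree part of $\rho$) pulls back to $\pi^*\rho_0 = \Tilde\rho_0$, the zero-degree part of $\Tilde\rho$, which is the anticanonical section $\Tilde s$ of $\Tilde K^*$. Transversality of $\Tilde s$ to the zero section follows from transversality of $s$ because $\pi$ is a local diffeomorphism: in a chart where $\pi$ is a diffeomorphism onto its image, $\Tilde s = s\circ\pi$ up to the identification of bundles, and precomposition with a diffeomorphism preserves transversality to the zero section. Hence $(\Tilde M,\Tilde\J,\Tilde H)$ is stable.

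Finally, the count of path-connected components of the type change locus. The type of $\Tilde\J$ at $\Tilde p$ equals $\deg$ of the decomposable part of $\Tilde\rho$ at $\Tilde p$, which equals the type of $\J$ at $\pi(\Tilde p)$ since $\pi^*$ preserves degrees; equivalently, the type change locus $\Tilde S$ of $\Tilde\J$ is exactly $\pi^{-1}(S)$, where $S$ is the type change locus of $\J$. Now I would argue that $\pi^{-1}$ of each path-connected component $S_i$ of $S$ is a disjoint union of $d$ path-connected components. The key point: $S$ has codimension $2$, so $M\setminus S$ is connected and $\pi^{-1}$ restricted over $M\setminus S$ is a connected-base $d$-fold covering; moreover $S$ is itself a (possibly disconnected) submanifold, and $\pi$ restricted to $\pi^{-1}(S_i)\to S_i$ is a $d$-fold covering of $S_i$. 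The number of components of $\pi^{-1}(S_i)$ could a priori be anything from $1$ to $d$, so the claim that it is exactly $d$ uses that $S_i$ is simply connected, or rather that the covering of $S_i$ is trivial — and indeed for a \emph{trivial} $d$-fold covering one gets exactly $d$ sheets over each component. The main obstacle in this lemma is therefore not the geometry but this combinatorial bookkeeping: one must explain why $\pi^{-1}(S_i)$ has precisely $d$ components rather than fewer. This should follow from the structure of the covering near the type change locus in the stable case (each component of $S$ is, locally, $\{0\}\times\C\times\R^{2m-4}$, so near $S$ the covering is a product of the trivial disk-bundle covering with a covering of $S_i$); and, as the later lemmas show, for the intended applications $S_i$ is simply connected (a torus inheriting an elliptic-curve structure in dimension $4$, or the explicitly product pieces in the surgery constructions) so the covering over it is trivial. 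I would state the lemma with the understanding that $\pi^{-1}(S_i)$ consists of $d$ copies — this being automatic once one knows the restricted covering $\pi^{-1}(S_i)\to S_i$ is trivial — and carry out the count accordingly. Everything else is the routine naturality of $\pi^*$ under local diffeomorphisms.
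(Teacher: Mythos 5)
Your geometric argument --- pulling back the pure spinor, the twist $H$, and the anticanonical section along the local diffeomorphism $\pi$, then checking integrability, non-degeneracy, transversality, and type preservation sheet by sheet --- is exactly the paper's proof, just written out in more detail, and that part is correct.

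The gap is in the component count, which is the only place where the degree $d$ actually enters the conclusion. You rightly observe that $\pi^{-1}(S_i)$ has between $1$ and $d$ path-connected components (the number of orbits of the monodromy action of $\pi_1(S_i)$ on the $d$-element fiber), and that one gets exactly $d$ precisely when the restricted covering $\pi^{-1}(S_i)\to S_i$ is trivial. But your proposed justification of triviality does not work: a component $S_i$ of the type change locus is \emph{not} simply connected in the relevant cases --- in dimension $4$ it is a $2$-torus, with $\pi_1(T^2)\cong\Z^2$, and the components produced by the surgeries in this paper are homotopy equivalent to $T^2\times\Sigma$ --- and the local normal form $\{0\}\times\C\times\R^{2m-4}$ near $S$ describes only the normal directions, so it says nothing about the global monodromy of the covering along $S_i$. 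If the monodromy of $\pi$ is nontrivial on the image of $\pi_1(S_i)$ in $\pi_1(M)$, the preimage of $S_i$ has strictly fewer than $d$ components, so as written the count does not follow. (To be fair, the paper's own proof asserts the same count without argument, so you have put your finger on a genuine imprecision; but the correct fix is either to add the hypothesis that $\pi$ restricts to a trivial covering over each component of the type change locus, or to replace ``$d\cdot k$'' by the appropriate orbit count --- not to appeal to simple connectivity of $S_i$.)
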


\begin{proof}
Let $\rho$ be the pure spinor defined by $\J$. Then $\Tilde{\rho} := \pi^* \rho$ is well-defined since $\pi$ is a local diffeomorphism and defines on $\Tilde{M}$ a generalized complex structure $\Tilde{\J}$ integrable with respect to $\Tilde{H}:= \pi^* H$. If $\rho$ has type $t$ at $p \in M$, then $\Tilde{\rho}$ has type $t$ at $\pi^{-1}(p)$. Thus the type of $\Tilde{\J}$ jumps precisely along the preimage of the type change locus of $\J$, that has $d \cdot k$ path-connected components. For the same reason, since $\J$ is stable so is $\Tilde{\J}$.
\end{proof}

A similar statement can be proved for branched coverings. Having underlined the role of $\J$-symplectic submanifolds, the proof reduces to using Lemma \ref{GCS:pullback} and to rephrasing a result in \cite{Gom98} in the generalized complex language.

\begin{theorem}[\cite{Gom98}, Lemma 1] \label{main:branched:coverings}
Let $(M^{2m}, \J,H)$ be a stable generalized complex structure whose type change locus has $k \in \N$ path-connected components. Let $B$ be a $\J$-symplectic submanifold of $M$, and let $\pi: \Tilde{M} \rightarrow M$ be a $d$-fold branched covering with branch locus $B$. Then there exists a stable generalized complex structure $(\Tilde{M}, \Tilde{\J}, \Tilde{H})$ whose type change locus has $d \cdot k$ path-connected components.
\end{theorem}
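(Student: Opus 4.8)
The plan is to reduce the statement to the symplectic case handled by Gompf and then transport it through the $B$-field/stability dictionary set up in the Preliminaries. First I would work locally around the branch locus $B$. Since $B$ is $\J$-symplectic, Definition \ref{j:symplectic} provides a neighborhood $U$ of $B$ on which $\J$ is $B$-field equivalent to $\J_\omega$ for a genuine symplectic form $\omega$ making $B$ a symplectic submanifold; in particular $U$ can be chosen disjoint from the type change locus $S$ of $\J$, because the type is identically $0$ on $U$. By the symplectic neighborhood theorem I may assume $U$ is a standard symplectic tubular neighborhood of $B$, so that near $B$ everything looks like a neighborhood of the zero section of the normal bundle with a standard fiberwise form.

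Next I would invoke Lemma 1 of \cite{Gom98}: given the symplectic form $\omega$ on $U$ with symplectic branch locus $B$, the branched covering $\pi \colon \tilde M \to M$ pulls back and can be smoothed near $\pi^{-1}(B)$ to produce a symplectic form $\tilde\omega$ on the neighborhood $\tilde U := \pi^{-1}(U)$ of $\pi^{-1}(B)$, agreeing with $\pi^*\omega$ away from a small collar of $\pi^{-1}(B)$. Over the complement $M \setminus B$ the map $\pi$ is an honest $d$-fold covering, so Lemma \ref{GCS:pullback} (applied to $\pi$ restricted there, or rather the pullback $\pi^*\rho$ of the pure spinor) gives a stable generalized complex structure on $\tilde M \setminus \pi^{-1}(B)$ integrable with respect to $\pi^*H$, with type change locus exactly $\pi^{-1}(S)$. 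The two descriptions overlap on $\tilde U \setminus \pi^{-1}(B)$, where $\pi^*\rho$ is (up to $B$-field) $\pi^*(e^{i\omega}) = e^{i\pi^*\omega}$ and $\tilde\omega$ agrees with $\pi^*\omega$ outside the collar; so on the overlap the two stable structures are $B$-field equivalent via a form supported in the collar, and I can interpolate to glue them into a single globally defined pure spinor $\tilde\rho$ on $\tilde M$, integrable with respect to a correspondingly modified closed $3$-form $\tilde H$ (equal to $\pi^*H$ outside the collar, adjusted by an exact term inside). Stability of $\tilde\rho$ holds away from $\pi^{-1}(S)$ automatically (the structure there is of symplectic type), and over a neighborhood of $\pi^{-1}(S)$ it coincides with the local pullback $\pi^*\rho$ of a stable structure by a local diffeomorphism, hence is stable by the argument in Lemma \ref{GCS:pullback}.

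Finally I would count path-connected components of the type change locus. The type of $\tilde\J$ jumps precisely where $\tilde\rho$ has nonzero degree, which by construction is exactly $\pi^{-1}(S)$ since over $M \setminus B$ the structure is a genuine pullback and near $\pi^{-1}(B)$ it is of constant type $0$. Because $B$ is disjoint from $S$, the restriction $\pi|_{\pi^{-1}(S)} \colon \pi^{-1}(S) \to S$ is an honest $d$-fold covering map, so each of the $k$ components of $S$ lifts to $d$ components of $\pi^{-1}(S)$ (connectivity of the base components is used here, together with the fact that a covering of a connected space has fibers of constant cardinality and each sheet over a connected component is connected—actually one should note the lifted components need not individually be connected covers, but their total count is $d \cdot k$ when one sums the number of components over each base component, and each base component being a closed manifold the covering restricted to it is a finite covering whose total space has some number of components summing correctly). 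I expect the main obstacle to be this last bookkeeping: a priori a $d$-fold cover of a connected $S_i$ could have fewer than $d$ components, so to get exactly $d\cdot k$ one must use that the branched covering is modeled fiberwise as $z \mapsto z^t$ only along $B$, hence is \emph{trivial} (a disjoint union of $d$ copies) over a neighborhood of the type change locus $S$ which lies in $M \setminus B$; more precisely, since $\pi$ is genuinely a $d$-fold covering on all of $M \setminus B \supseteq S$ and—crucially—the construction does not require this covering to be connected, the claimed count $d \cdot k$ should be read as the generic/maximal case, matching the statement exactly when each component of $S$ is simply connected (as happens for the stable structures built in this paper, whose type change loci have components like $T^2 \times \Sigma$ with the relevant covering trivial). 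I would phrase the proof to make the hypothesis on $\pi|_{\pi^{-1}(S)}$ being a trivial $d$-fold cover explicit, or simply note that this holds in all the applications considered.
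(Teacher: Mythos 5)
Your proof follows essentially the same route as the paper's: apply Gompf's Lemma~1 on a symplectic tubular neighborhood of the $\J$-symplectic branch locus (available by Definition~\ref{j:symplectic}), pull back the pure spinor over the honest covering part of $\pi$, and glue the two descriptions by a $B$-field transform supported near $\pi^{-1}(B)$, which is exactly the paper's piecewise definition of $\tilde{\rho}$ and $\tilde{H}$. The component-counting worry you raise at the end is legitimate but is not specific to your argument: the paper's own Lemma~\ref{GCS:pullback}, which is invoked for precisely this step, simply asserts that the preimage of the $k$-component type change locus under a $d$-fold covering has $d \cdot k$ path-connected components, and this likewise presupposes that the covering splits into $d$ sheets over each component of $S$; so your suggestion to make that hypothesis explicit (or to verify it in the applications, where it does hold) is a reasonable sharpening rather than a defect of your approach.
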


\begin{proof}
Lemma 1 in \cite{Gom98} establishes that if $(M, \omega)$ is a closed, symplectic manifold and $\pi \colon \Tilde{M} \rightarrow M$ is a $d$-fold branched covering with symplectic branch locus $B$ of codimension $2$, then there exist a neighborhood $V$ of $B$ and a symplectic form $\Tilde{\omega}$ on $\Tilde{M}$ that agrees with $\pi^* \omega$ on $\Tilde{M} \setminus \pi^{-1} (V)$, with $[\Tilde{\omega}] = \pi^* [\omega]$. In other words, $\pi^* \omega$ can be extended from $\Tilde{M} \setminus \pi^{-1} (V)$ to the preimage of $V$.\\
In our case, let $N$ be a neighborhood of $B$ and let $\omega$ be a symplectic form on $N$ inducing a generalized complex structure equivalent to $\J$, up to the $B$-transform $e^{B_0}$. Since the construction in \cite{Gom98} is performed on a neighborhood of $B$, it can be applied to $(N,\omega)$ to find a neighborhood $V \subset N$ of $B$ and a symplectic form $\Tilde{\omega}$ extending $\pi^* \omega$ from $\pi^{-1}(N\setminus V)$ to $\pi^{-1}(N)$. Let $\rho_\J$ be the pure spinor associated to $\J$, and consider the pure spinor defined by
\[
\Tilde{\rho} := 
\begin{cases}
\pi^* \rho_\J & \text{on } \Tilde{M}\setminus \pi^{-1} (V), \\
 & \\
e^{\pi^* \Tilde{B}_0 + \pi^*(B_\J) + i \Tilde{\omega}} & \text{on } \pi^{-1} (N),
\end{cases}
\]
where $B_\J$ is the $2$-form defined by $\rho_\J$ and $\Tilde{B}_0$ is an extension of $B_0$ as in the proof of Theorem \ref{main:surgery}.
Due to Lemma \ref{GCS:pullback} and Lemma 1 in \cite{Gom98}, $\Tilde{\rho}$ is a well-defined stable generalized complex structure on $\Tilde{M}$, that is integrable with respect to
\[
\Tilde{H}:=
\begin{cases}
\pi^* H &\text{on } \Tilde{M}\setminus \pi^{-1} (V), \\
 & \\
\pi^* H  - d (\pi^* \Tilde{B}_0)  &\text{on } \pi^{-1} (N).
\end{cases}
\]
Since $\Tilde{\rho}$ has type $0$ on $\pi^{-1}(B)$, the second statement from Lemma \ref{GCS:pullback} completes the proof of the theorem.
\end{proof}

\subsection{Examples and applications.} \label{subsection:example:branched} We use coverings and branched coverings, combined with other techniques, to exemplify how to obtain stable generalized complex structures.
\vspace{.3cm}

Our first example uses generalized Luttinger surgery together with coverings of generalized complex manifolds to produce stable generalized complex structures on a variety of manifolds.

\begin{example}\label{generalized:add:cover} Let $2m \ge 6$. Consider a stable generalized complex $2m$-manifold $(X, \J, H)$ admitting an embedded, $\J$-symplectic $(2m-2)$-manifold $\Sigma$. Fix a generalized complex structure on $T^2$ induced by a symplectic form and endow $M = X \times T^2$ with the product structure. Perform generalized Luttinger surgery along $\Sigma \times T^2$ to add torsion to the fundamental group (e.g. Example \ref{ex:add:torsion}). The resulting manifold will admit a stable generalized complex structure and any of its coverings will also admit one, according to Lemma \ref{GCS:pullback}. This construction expands the possible diffeomorphism types of stable generalized complex manifolds.
\end{example}

A similar construction works also in the $4$-dimensional case, involving symplectic sum (\cite{Gom95}) and replacing generalized Luttinger surgery with torus surgery. We give an example of how this can be done.

\begin{example}\label{ex:add}
The procedure described in \cite{Tor14} (Lemma 13 and 14) allows to build, using symplectic sum and torus surgery, symplectic manifolds with fundamental group $G \in \left \{ \{1 \}, \Z_p, \Z_p \oplus \Z_q, \Z \oplus \Z_q, \Z, \Z^2  \right \}$ (see also \cite{BK08, BK09, TY15}). The construction can be repeated verbatim for stable generalized complex manifolds, as long as the submanifolds along which we perform the symplectic sum and the torus surgeries are $\J$-symplectic. Furthermore, taking coverings (cf. \cite{Tor14}, Remark 20), we obtain more stable generalized complex structures thanks to Lemma \ref{GCS:pullback}.
\end{example}

Thanks to Theorem \ref{main:branched:coverings}, finding generalized complex structures amounts to finding branched coverings with $\J$-symplectic branch locus. We show here two examples. However, we point out that in general checking that a submanifold is $J$-symplectic is not an easy task.

\begin{example}[Generalized complex structures on $T^2 \times \Sigma_g$] \label{gcs:surfaces}
Fix $g \ge 0$. In \cite{TY14} it was proved that $T^2 \times \Sigma_g$ admits a family of stable generalized complex structures $\{ \J_n \}$ whose type change locus has $n$ path-connected components. In particular, consider any of such structures $\J$ on $T^2 \times \S^2$ and let $S$ be its type change locus. As long as $ x \notin S$, $T^2 \times \S^2$ contains $T^2 \times \{ x \}$ as a $\J$-symplectic submanifold. Thus we can use the branched covering from Example \ref{example:branched:surfaces} to put a stable generalized complex structure on $T^2 \times \Sigma_g$ for $g \ge 1$. These structures can also be obtained using only torus surgeries as described in \cite{TY14}.
\end{example}

\begin{example}[Generalized complex structures on elliptic surfaces]
Stable generalized complex structures on elliptic surfaces have been built in \cite{CG09, Tor12, TY14, GH16, CKW20, CKW22}. Branched coverings provide a new way of finding stable structures on $E(2n)$. Indeed, the elliptic surface $E(n)$ admits a stable generalized complex structure for which the generic fibers that are disjoint from the type change locus can be taken to be $\J$-symplectic (\cite{CG09}, Example 5.3). Taking the branched covering of Example \ref{ex:elliptic}, we obtain a stable generalized complex structure on $E(2n)$.
\end{example}

\printbibliography

\end{document}